\documentclass[a4paper,12pt]{article}

\usepackage{amsmath,amssymb,amsfonts}
\usepackage{amsthm}
\usepackage{bm}
\usepackage{url}

\newtheorem{thm}{Theorem}
\newtheorem{lem}[thm]{Lemma}
\newtheorem{prop}[thm]{Proposition}
\newtheorem{cor}[thm]{Corollary}

\theoremstyle{definition}

\theoremstyle{remark}
\newtheorem{rem}[thm]{Remark}

\newcommand{\re}{\mathbb R}
\newcommand{\ze}{\mathbb Z}
\newcommand{\ex}{\mathbb E}
\newcommand{\pr}{\mathbb P}
\newcommand{\PD}{\operatorname{PD}}
\newcommand{\SL}{\operatorname{SL}}
\newcommand{\GH}{\operatorname{GH}}
\newcommand{\LI}{\operatorname{LI}}
\newcommand{\prim}{\operatorname{prim}}
\newcommand{\Var}{\operatorname{Var}}

\title{Thermal Concentration and Poisson--Dirichlet Edge Statistics for Random-Lattice Gibbs Ensembles}
\author{Masahiro Kaminaga\\
Tohoku Gakuin University, Sendai, Japan\\
E-mail: masahiro.kaminaga@gmail.com, \\
kaminaga@mail.tohoku-gakuin.ac.jp}
\date{}

\begin{document}
\maketitle

\begin{abstract}
We study Gibbs measures on high-dimensional Haar-random unimodular lattices, where the energy of a lattice vector is its squared Euclidean norm.
The random lattice is viewed as quenched geometric disorder, and $c>0$ denotes the scaled inverse temperature.
We first analyze the edge window of vectors whose length is within the factor $e^{a/n}$ of the shortest length, with fixed $a$ as $n\to\infty$.
For the full sign-class Gibbs ensemble, we prove a Poisson point process limit theorem for the Gibbs mass of this window.
The mass vanishes in probability for $0<c\le1$, while for $c>1$ it has a nontrivial limit given by a Poisson--process functional, and the ranked Gibbs weights converge to the Poisson--Dirichlet distribution with parameter $1/c$.
We then pass to a primitive-direction Gibbs ensemble and consider a fixed approximation factor $\gamma>1$.
For this modified ensemble, we prove a weighted moment formula and a quenched thermal concentration result in the high-temperature range $0<c<1$.
This yields the primitive fixed-factor visibility curve $c=\gamma^{-2}$ for approximate shortest directions.
More precisely, the primitive Gibbs mass of the fixed-factor window tends to zero for $c<\gamma^{-2}$, to one for $\gamma^{-2}<c<1$, and to $1/2$ at the critical boundary $c=\gamma^{-2}$.
Thus the fixed-factor theorem is a visibility statement for an idealized primitive target measure, not for the original full lattice Gibbs measure.
The results provide a random-lattice thermodynamic reference model for Gibbs targets related to approximate shortest vectors, without implying an efficient algorithm for the shortest vector problem.
\end{abstract}

\noindent\textit{Keywords}: random lattices, Gibbs measures, Poisson--Dirichlet distribution,
shortest vector problem, thermal concentration

\section{Introduction}
\label{sec:introduction}
The purpose of this paper is to study a simple Gibbs ensemble on
high-dimensional random lattices.
Let $L\subset\re^n$ be a unimodular lattice.
We put
$$
E(\bm v)=\|\bm v\|^2,
\qquad \bm v\in L\setminus\{\bm 0\}.
$$
The Gibbs distribution at inverse temperature $\beta$ is proportional to
$\exp(-\beta\|\bm v\|^2)$.
Thus, after excluding the zero vector, the shortest nonzero vectors play the role of ground states.
We denote the length of a shortest nonzero vector by
$$
\lambda_1(L)=
\min\left\{\|\bm v\|:\bm v\in L\setminus\left\{\bm 0\right\}\right\}.
$$
Vectors whose length is close to $\lambda_1(L)$ form a low-energy edge.
The question considered here is when this edge, or a fixed approximation
window around it, carries visible Gibbs mass.
From a statistical--mechanical point of view, this is a visibility problem for
an equilibrium target measure.
The state space is random and quenched, while the energy is deterministic.
The main issue is whether the entropy coming from many nearly low-energy
states can overcome their energy penalty.

Gaussian weights on Euclidean lattices are classical objects.
They appear in theta series, Gaussian lattice sums, Epstein zeta functions,
and discrete Gaussian distributions in lattice cryptography.
The present paper studies normalized Gibbs probabilities of shortest-vector
approximation windows, rather than unnormalized lattice sums.
The lattice is the quenched geometric disorder and the set of lattice points is the random state space.
Thus the model is a simple disordered geometric Gibbs ensemble in which a
deterministic quadratic energy competes with the growth of available states.
It is similar in spirit to random energy models~\cite{Derrida80,Derrida81},
but the energies here are constrained by lattice geometry and are not independent.
The model also gives a random-lattice comparison setting for questions
related to approximate shortest vectors.
In this setting, the edge near the shortest length and the
entropy--energy balance determining the visibility of approximation
windows can both be analyzed explicitly.

The main random model is the space
$$
\mathbb X_n=\SL(n,\ze)\backslash\SL(n,\re)
$$
of unimodular lattices in $\re^n$ with its invariant probability measure $\mu_n$.
Here $\SL(n,\re)$ denotes the group of real $n\times n$ matrices with
determinant one, and $\SL(n,\ze)$ is its subgroup consisting of integer matrices.
This quotient is identified with unimodular lattices in $\re^n$.
We write $\re_+=(0,\infty)$.
For a nonzero vector modulo sign, put
$$
\tau_n(\bm u)=v_n\|\bm u\|^n,
$$
where $v_n$ is the volume of the Euclidean unit ball in $\re^n$.
S\"odergren's theorem says that the point process formed by these volume
variables converges to a Poisson process on $\re_+$ with intensity $1/2$ as $n\to\infty$ \cite{Sodergren11}.
The precise meaning of this point--process convergence will be recalled in Section~\ref{sec:random}.
This Poisson limit is the probabilistic basis for the edge part of the paper.
The temperature scale used here is
$$
\beta_n(c)=\frac{cn}{2}v_n^{2/n}, \qquad c>0.
$$
The parameter $c$ is the scaled inverse temperature in this normalization.
In this paper, the high-temperature range means the scaled range $0<c<1$.
In the edge theorem, the range $c>1$ is the corresponding low--temperature edge--condensed range.
Since
$$
v_n^{2/n}\|\bm u\|^2=\tau_n(\bm u)^{2/n},
$$
a common factor may be removed from the Gibbs probabilities.
The relevant weight is
$$
g_{n,c}(s)=\exp\left\{-\frac{cn}{2}(s^{2/n}-1)\right\}.
$$
For fixed $s>0$, this weight converges to $s^{-c}$.
Thus the low-energy Gibbs weights are governed by power weights on 
the limiting Poisson cloud, and the summability of these power weights changes at $c=1$.
The thresholds obtained below are visibility thresholds of the target Gibbs measure.

The first main result is an edge theorem for the full sign-class Gibbs ensemble.
For the approximation factor $\gamma_n=e^{a/n}$, the event
$\|\bm u\|\le \gamma_n\lambda_1(L)$ is the same as $\tau_n(\bm u)\le e^a\tau_{n,1}(L)$.
We prove that its Gibbs mass tends to zero in probability for $0<c\le1$.
For $c>1$, it converges in distribution to an explicit functional of the limiting Poisson process.
In the same regime, the full ranked Gibbs mass partition converges to the
Poisson--Dirichlet distribution $\PD(1/c,0)$.
This is a condensation statement at the random-lattice edge:
in the low--temperature phase, the Gibbs measure does not spread evenly over
many short vectors, and a random ranked mass partition survives in the limit.

The second main result concerns fixed approximation factors $\gamma>1$.
For such a window, the corresponding interval in the volume variable is
exponentially large, and the Poisson edge theorem is no longer sufficient.
We therefore pass to the exponential coordinate
$$
Y_n(\bm u)=\frac{1}{n}\log\tau_n(\bm u).
$$
The term $y$ represents the exponential growth of the number of available
lattice points on the volume scale $\tau_n\simeq e^{ny}$, while the term
$-(c/2)(e^{2y}-1)$ comes from the Gibbs weight.
The entropy and energy contributions combine into
$$
\Phi_c(y)=y-\frac{c}{2}(e^{2y}-1),\qquad y\ge0.
$$
In the high-temperature range $0<c<1$, this function has its unique maximum at
$$
y_c=\frac{1}{2}\log\frac{1}{c}.
$$
The fixed-factor theorem is not stated for the original full lattice Gibbs ensemble.
Instead, it is stated for a primitive-direction Gibbs ensemble.
This restriction is natural for the study of shortest-vector directions,
because every nonprimitive vector is an integer multiple of a shorter
vector in the same lattice direction.
For these primitive sign classes, we prove a weighted moment formula and use it
to show that the primitive Gibbs mass concentrates, in probability, near $y_c$.
It follows that the primitive Gibbs mass of a fixed approximation window has
the sharp visibility curve
$$
c=\gamma^{-2}.
$$
This curve is an entropy--energy balance.
Below it, the approximation window misses the typical thermal scale.
Above it, the same window contains the typical thermal scale.
At the critical boundary $c=\gamma^{-2}$, a local Laplace estimate shows that
the primitive Gibbs mass of the fixed approximation window converges to $1/2$.
Thus the fixed-factor result is a quenched high-dimensional concentration
theorem for an idealized primitive target measure, rather than a statement
about the original full lattice Gibbs measure.
To the best of our knowledge, this explicit primitive fixed-factor visibility
curve has not been formulated in this random-lattice Gibbs setting.
The main regimes are summarized in Table~\ref{tab:main-regimes}.

\begin{table}[t]
\centering
\footnotesize
\setlength{\tabcolsep}{3pt}
\renewcommand{\arraystretch}{1.15}
\begin{tabular}{|p{0.20\textwidth}|p{0.28\textwidth}|p{0.22\textwidth}|p{0.21\textwidth}|}
\hline
Regime & Window and ensemble & Condition on $c$ & Limit \\
\hline
Edge, invisible &
$e^{a/n}$; full sign classes &
$0<c\leq 1$ &
mass tends to $0$ \\
\hline
Edge, condensed &
$e^{a/n}$; full sign classes &
$c>1$ &
Poisson functional; $\PD(1/c,0)$ partition \\
\hline\hline
Fixed factor, noncritical &
$\gamma>1$; primitive sign classes &
$0<c<\gamma^{-2}$ or $\gamma^{-2}<c<1$ &
mass tends to $0$ or $1$, respectively \\
\hline
Critical fixed factor &
$\gamma>1$; primitive sign classes &
$c=\gamma^{-2}$ &
mass tends to $1/2$ \\
\hline
\end{tabular}
\caption{Summary of the visibility regimes considered in this paper.}
\label{tab:main-regimes}
\end{table}
The present work is related to several existing uses of Gaussian and power weights on lattices.
Theta series and Gaussian lattice sums give partition functions of Euclidean
lattices, while Epstein zeta functions give power--weighted analogues.
In high dimension, S\"odergren studied the value distribution and moments of
$E_n(L,s)$ to the right of the critical strip and also the value distribution
in the critical strip \cite{Sodergren10,Sodergren13}.
Those works concern unnormalized sums of weights over lattice vectors.
The present paper instead studies normalized Gibbs masses, ranked mass
partitions, and thermal visibility of approximation windows.

The shortest vector problem (SVP) gives one motivation for this study.
For a full-rank lattice, SVP asks for a nonzero vector of minimum Euclidean norm; its approximate version asks for a vector within a prescribed factor of this minimum.
Such questions are related to lattice--based cryptography, where worst--case to average--case reductions for the short integer solution (SIS) problem and the learning with errors (LWE) problem play a central role \cite{Ajtai96,MG02,Regev09,MicciancioRegev07}.
The cryptographic role of the present result is only indirect: it provides a random-lattice comparison setting for Gibbs or thermal targets associated with approximate SVP, but it does not address the security of concrete structured lattice families.
The results are static visibility statements for idealized target Gibbs
measures and do not imply that an efficient sampling algorithm exists.
Nevertheless, such visibility statements are useful as a thermodynamic reference point.
If a target Gibbs measure puts negligible mass on a desired approximation
window, then an ideal sample from that target cannot find such a vector with
nonnegligible probability.

%%%%%%%%%%%%%%%%%%%%%%%%%%%%%%%%%%%%%%%%%%%%%%%%
\section{Lattice Gibbs Ensembles}
\label{sec:gibbs}

Let $L\subset\re^n$ be a full-rank lattice.
We write
$$
\lambda_1(L)=\min\{\|\bm v\|:\bm v\in L\setminus\{\bm 0\}\}.
$$
For $\gamma\ge1$, the $\gamma$--approximate shortest vector problem asks for a nonzero $\bm v\in L$ satisfying
$$
\|\bm v\|\le\gamma\lambda_1(L).
$$
We identify opposite vectors and write
$$
L^\sharp=(L\setminus\{\bm 0\})/\{\pm1\}
$$
for the set of sign classes of nonzero lattice vectors.
For $\bm u\in L^\sharp$, the symbol $\|\bm u\|$ means the norm of either representative.

For $\beta>0$, define
$$
Z_L^\sharp(\beta)=\sum_{\bm u\in L^\sharp}e^{-\beta\|\bm u\|^2}.
$$
The Gibbs probability of a sign class $\bm u$ is
$$
\pi_{L,\beta}(\bm u)=
\frac{e^{-\beta\|\bm u\|^2}}{Z_L^\sharp(\beta)}.
$$
The Gibbs probability mass assigned to the $\gamma$--approximate set is
$$
P_{L,\beta}(\gamma)=
\sum_{\bm u\in L^\sharp,\ \|\bm u\|\le\gamma\lambda_1(L)}
\pi_{L,\beta}(\bm u).
$$
This number is the success probability of one ideal Gibbs sample.

For a fixed lattice, the zero-temperature limit is simple.
Let
$$
\kappa^\sharp(L)
=
\#\{\bm u\in L^\sharp:\|\bm u\|=\lambda_1(L)\}.
$$
Thus $\kappa^\sharp(L)$ is the number of shortest sign classes, or equivalently one half of the number of shortest nonzero vectors in $L$.
Since the length spectrum of a fixed full-rank lattice is discrete, the non--shortest sign classes are separated from the shortest ones by a positive energy gap.
Hence
$$
Z_L^\sharp(\beta)
=
\kappa^\sharp(L)e^{-\beta\lambda_1(L)^2}
\{1+o(1)\}
$$
as $\beta\to\infty$.
Indeed, the shortest sign classes give the leading contribution, while every other sign class has strictly larger energy.
The following observation explains why approximation windows, rather than exact
shortest vectors alone, are natural at finite temperature.

\begin{rem}
For a fixed lattice, the zero-temperature limit is concentrated on the
shortest sign classes.
At finite temperature, however, an approximation window may carry visible mass
when the number of near--shortest sign classes compensates for their energy
penalty.
This is one motivation for studying Gibbs mass of approximation windows rather
than only exact shortest vectors.
\end{rem}

\section{Random Lattices and Edge Statistics}
\label{sec:random}

Let $\mu_n$ be the invariant probability measure on
$$
\mathbb X_n=\SL(n,\ze)\backslash\SL(n,\re).
$$
For $L\in\mathbb X_n$ and $\bm u\in L^\sharp$, put
$$
\tau_n(\bm u)=v_n\|\bm u\|^n.
$$
Let
$$
\tau_{n,1}(L)=\min_{\bm u\in L^\sharp}\tau_n(\bm u).
$$
We write $\Rightarrow$ for convergence in distribution of random variables,
or more generally of random elements.
We use the following theorem of S\"odergren \cite{Sodergren11}.

\begin{thm}[S\"odergren's Poisson limit theorem]
\label{thm:sodergren}
Let $L_n$ be distributed according to $\mu_n$ on $\mathbb X_n$.
The point process
$$
\xi_n=\sum_{\bm u\in L_n^\sharp}\delta_{\tau_n(\bm u)}
$$
converges in distribution to a Poisson point process $\xi$ on $\re_+$ with intensity $1/2$.
Here $\delta_t$ denotes the Dirac measure at $t$.
If $0<T_1<T_2<\cdots$ are the points of $\xi$, then the finite--dimensional statistics of the short sign classes converge to the corresponding statistics of these points.
\end{thm}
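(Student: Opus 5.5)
The plan is the method of moments for the counting functions of $\xi_n$, computed through the Siegel and Rogers mean value formulas; this is how S\"odergren proves the result in \cite{Sodergren11}. Convergence in distribution of point processes on $\re_+$ to a simple Poisson process follows from convergence of the finite-dimensional distributions of counts on finite unions of bounded intervals. By Kallenberg's criterion it is enough to check, for disjoint bounded intervals $I_1,\dots,I_k\subset\re_+$, that the joint factorial moments of the counts $N_n(I_j)=\xi_n(I_j)$ converge to those of independent Poisson variables with means $|I_j|/2$. Because Poisson laws are determined by their moments, this gives $(N_n(I_1),\dots,N_n(I_k))\Rightarrow$ independent $\mathrm{Poisson}(|I_j|/2)$. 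The claim about $T_1<T_2<\cdots$ then follows by the usual continuous-mapping argument, since $\{\tau_{n,j}\le t\}=\{N_n((0,t])\ge j\}$.

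The first moment is Siegel's mean value theorem. For fixed $I=(\alpha,\beta]$, the set $\{\bm x:\tau_n(\bm x)\in I\}$ is a spherical shell $S_I$ of volume $|I|$. Siegel gives $\ex\,\#(L\cap S_I\setminus\{\bm 0\})=|I|$. Dividing by $2$ for sign classes yields $\ex N_n(I)=|I|/2$ exactly. The count of nonprimitive vectors in $S_I$ is handled by M\"obius inversion: multiples $m\bm u$ with $m\ge2$ have $\tau_n(m\bm u)=m^n\tau_n(\bm u)$, so their contribution is $\sum_{m\ge2}m^{-n}|I|/\zeta(n)\to0$. It is therefore enough to work with primitive vectors.

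For higher factorial moments I would use Rogers' formula for $\ex\sum_{\bm x_1,\dots,\bm x_k}f(\bm x_1,\dots,\bm x_k)$ over $k$-tuples of distinct primitive vectors, with $f=\prod_j\mathbf 1_{S_{I_j}}$ (or a product of indicators of the same shell, for factorial powers). Rogers' formula writes this expectation as a main term $\prod_j\int f_j=\prod_j|I_j|$ plus a sum over integer matrices describing linear dependencies among the $\bm x_i$. Tuples of independent vectors give exactly the main term. After dividing by $2^k$ this is $\prod_j(|I_j|/2)$, which is the Poisson factorial moment. The dependent configurations that are allowed up to sign, such as $\bm x_i=\pm\bm x_j$, are precisely the ones removed by counting sign classes and distinct points.

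The main obstacle is showing that the remaining linear-dependence terms tend to $0$ as $n\to\infty$ for each fixed $k$. These terms are integrals over lower-dimensional configurations such as $\bm x_3=(p\bm x_1+q\bm x_2)/d$. I would bound them using Rogers' estimates, which is the strategy of S\"odergren and of Rogers' own Poisson-type results. For rational relations with coefficients not all $\pm1$, a short vector combination is forced into a shell whose relative volume decays exponentially in $n$. One also has to control the sum over all admissible coefficient matrices uniformly; this is the delicate counting step, where I would follow Rogers' bound for $k<n$ relations, valid once $n$ is large compared with $k$. With these error terms made $o(1)$, the factorial moments converge, and the theorem follows.
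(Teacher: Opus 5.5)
The paper does not prove this theorem; it quotes it from S\"odergren \cite{Sodergren11}, and your outline is essentially the route of that cited proof: Siegel for the first moment, M\"obius reduction to primitive vectors, Rogers' mean value formula for the joint factorial moments of shell counts, the method of moments, and then passage to the ordered points $T_j$. As in the paper, the hard step (uniform decay of the linear-dependence terms) is delegated to Rogers' estimates, and one correction to your heuristic is that relations with all coefficients $\pm1$ such as $\bm x_3=\bm x_1+\bm x_2$ are neither removed by passing to distinct sign classes nor covered by your ``coefficients not all $\pm1$'' remark. They are still negligible, because for a ball $B$ containing the shells one has $\int_B\mathrm{vol}\bigl(B\cap(B-\bm x)\bigr)\,d\bm x\le n(\sqrt3/2)^{n-1}\mathrm{vol}(B)^2$.
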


For $c>0$ define
$$
\beta_n(c)=\frac{cn}{2}v_n^{2/n}.
$$
Since $v_n^{2/n}\|\bm u\|^2=\tau_n(\bm u)^{2/n}$, the common factor $e^{-cn/2}$ may be removed without changing Gibbs probabilities.
We therefore put
$$
g_{n,c}(s)=\exp\left\{-\frac{cn}{2}(s^{2/n}-1)\right\}
$$
and
$$
M_{n,c}(L)=\sum_{\bm u\in L^\sharp}g_{n,c}(\tau_n(\bm u)).
$$
The Gibbs probability of a sign class is
$$
\pi_{n,c,L}(\bm u)=
\frac{g_{n,c}(\tau_n(\bm u))}{M_{n,c}(L)}.
$$
For $a\ge0$, define
$$
P_{n,c}(a,L)=
\sum_{\bm u\in L^\sharp,\ \tau_n(\bm u)\le e^a\tau_{n,1}(L)}
\pi_{n,c,L}(\bm u).
$$
This is the Gibbs mass of the sign classes satisfying
$$
\|\bm u\|\le e^{a/n}\lambda_1(L).
$$

\begin{lem}
\label{lem:powersum}
Let $0<T_1<T_2<\cdots$ be the points of a Poisson process on $\re_+$ with intensity $1/2$.
Then
$$
\sum_{j=1}^{\infty}T_j^{-c}<\infty
$$
almost surely if $c>1$, while the same sum diverges almost surely if $0<c\le1$.
\end{lem}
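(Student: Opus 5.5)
The plan is to reduce the sum to a statement about sums of independent exponential random variables, using the representation $T_j=E_1+\cdots+E_j$. Here the $E_i$ are i.i.d.\ exponential with mean $2$, since the intensity is $1/2$. By the strong law of large numbers, $T_j/j\to 2$ almost surely. Hence, on an event of probability one, there is a random index $J$ such that $j\le T_j\le 3j$ for all $j\ge J$.

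On that event the tail of the series is controlled by a deterministic $p$-series:
$$
(3j)^{-c}\le T_j^{-c}\le j^{-c},\qquad j\ge J.
$$
The finitely many terms with $j<J$ are finite, because $T_1>0$ almost surely. Therefore $\sum_j T_j^{-c}$ converges exactly when $\sum_j j^{-c}$ converges, that is, exactly when $c>1$. This settles both directions at once, almost surely.

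As an alternative route to the convergence half, I could use Campbell's formula: $\ex\sum_j T_j^{-c}\mathbf 1_{T_j>1}=\frac12\int_1^\infty t^{-c}\,dt<\infty$ for $c>1$. Only finitely many points lie in $(0,1]$, and all are strictly positive. For the divergence half, the same formula gives infinite mean on $(1,\infty)$ when $c\le1$, but that alone does not yield almost sure divergence. The law of large numbers comparison above does, and I will use it as the main argument.

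The main (mild) obstacle is the divergent case $0<c\le1$. An infinite expectation is not enough, so a pathwise comparison is required, and the strong law of large numbers supplies it. One more point to check is that $T_1>0$ almost surely, so that no single term is infinite. This holds because a Poisson process with intensity $1/2$ has no atom at $0$.
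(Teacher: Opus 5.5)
Your proof is correct and takes the same approach as the paper. Both arguments establish $T_j/j\to 2$ almost surely and compare $\sum_j T_j^{-c}$ with the $p$-series $\sum_j j^{-c}$. The paper obtains $T_j/j\to 2$ from $N(R)/R\to 1/2$, while you obtain it from the strong law for the exponential gaps, which is an equivalent route.
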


\begin{proof}
For a Poisson process with intensity $1/2$, the counting function $N(R)=\#\{j:T_j\le R\}$ satisfies $N(R)/R\to1/2$ almost surely.
Equivalently, $T_j/j\to2$ almost surely.
Hence $T_j^{-c}$ has the same summability behavior as $(2j)^{-c}$.
\end{proof}

\begin{lem}
\label{lem:tail}
If $c>1$ and $R\ge1$, then for every $n$,
$$
\ex_{\mu_n}\left[\sum_{\bm u\in L^\sharp,\ \tau_n(\bm u)>R}
 g_{n,c}(\tau_n(\bm u))\right]
\le
\frac{R^{1-c}}{2(c-1)}.
$$
\end{lem}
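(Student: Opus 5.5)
Apply the Siegel mean value formula to the function $f(\bm v)=g_{n,c}(v_n\|\bm v\|^n)\,\mathbf 1\{v_n\|\bm v\|^n>R\}$ on $\re^n$. Summing over sign classes counts each $\pm\bm v$ pair once, so
$$
\ex_{\mu_n}\Bigl[\sum_{\bm u\in L^\sharp,\ \tau_n(\bm u)>R} g_{n,c}(\tau_n(\bm u))\Bigr]
=\frac12\int_{\re^n} f(\bm x)\,d\bm x .
$$
The function $f$ is radial. Substituting $s=v_n\|\bm x\|^n$, so that $ds=n v_n\|\bm x\|^{n-1}d\|\bm x\|$ equals the volume element of spherical shells, the right-hand side becomes
$$
\frac12\int_R^\infty g_{n,c}(s)\,ds .
$$
This is exact for every $n$, so no asymptotics are needed.

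It remains to show that $g_{n,c}(s)\le s^{-c}$ for all $s\ge1$. Write $s=e^{t}$ with $t\ge0$. Then
$$
\frac{n}{2}\bigl(s^{2/n}-1\bigr)=\frac{n}{2}\bigl(e^{2t/n}-1\bigr)\ge \frac n2\cdot\frac{2t}{n}=t=\log s,
$$
because $e^x-1\ge x$. Hence $g_{n,c}(s)=\exp\{-c\,\tfrac n2(s^{2/n}-1)\}\le e^{-c\log s}=s^{-c}$.

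Since $R\ge1$, the integration range lies in $s\ge1$, and so
$$
\frac12\int_R^\infty g_{n,c}(s)\,ds\le\frac12\int_R^\infty s^{-c}\,ds=\frac{R^{1-c}}{2(c-1)} ,
$$
where the integral converges because $c>1$.

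The only point needing care is the justification of Siegel's formula for $f$. The function $f$ is nonnegative and measurable, and Siegel's formula holds for such functions by monotone convergence, with both sides allowed to be infinite a priori. The bound above shows the right-hand side is finite. The other step to check is the bookkeeping factor $1/2$ from sign classes, since $L^\sharp$ identifies $\bm v$ with $-\bm v$.
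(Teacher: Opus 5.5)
Your proposal is correct and matches the paper's proof: Siegel's mean value formula in the volume variable $s=v_n\|\bm x\|^n$, with the factor $1/2$ from sign classes, reduces the expectation to $\frac12\int_R^\infty g_{n,c}(s)\,ds$, and the pointwise bound $g_{n,c}(s)\le s^{-c}$ obtained from $e^x\ge 1+x$ finishes the estimate. You also justify Siegel's formula for nonnegative measurable functions and track the sign-class factor explicitly, which the paper leaves implicit.
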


\begin{proof}
For $s\ge1$, the inequality $e^x\ge1+x$ with $x=(2/n)\log s$ gives $s^{2/n}-1\ge(2/n)\log s$.
Hence $g_{n,c}(s)\le s^{-c}$ for $s\ge1$.
Siegel's mean value formula \cite{Siegel45} in the volume variable gives
$$
\ex_{\mu_n}\left[\sum_{\bm u\in L^\sharp,\ \tau_n(\bm u)>R}
 g_{n,c}(\tau_n(\bm u))\right]
=
\frac{1}{2}\int_R^\infty g_{n,c}(s)\,ds.
$$
The last integral is at most $\int_R^\infty s^{-c}\,ds$.
This proves the estimate.
\end{proof}

\section{Poisson--Dirichlet Edge Condensation}
\label{sec:edge}

The first main theorem describes the edge approximation factor $e^{a/n}$.
This regime is very close to the exact shortest vector, but it is the regime in which the short--vector Poisson process gives a complete limit theorem.

\begin{thm}[Edge approximate visibility]
\label{thm:edge}
Let $L_n$ be distributed according to $\mu_n$ on $\mathbb X_n$.
Fix $a\ge0$.
If $0<c\le1$, then
$$
P_{n,c}(a,L_n)\to0
$$
in probability as $n\to\infty$.
If $c>1$, then
$$
P_{n,c}(a,L_n)
\Rightarrow
Q_c(a):=
\frac{\sum_{T_j\le e^aT_1}T_j^{-c}}
{\sum_{j=1}^{\infty}T_j^{-c}},
$$
where $0<T_1<T_2<\cdots$ are the points of a Poisson process on $\re_+$ with intensity $1/2$.
\end{thm}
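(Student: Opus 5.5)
The plan is to write $P_{n,c}(a,L_n)=A_n/M_{n,c}(L_n)$, where
$$
A_n=\sum_{\bm u\in L_n^\sharp,\ \tau_n(\bm u)\le e^a\tau_{n,1}(L_n)} g_{n,c}(\tau_n(\bm u)).
$$
Both numerator and denominator are then handled through Theorem~\ref{thm:sodergren}, truncating at a fixed level $R$. The basic observation is that $g_{n,c}(s)\to s^{-c}$ locally uniformly on $\re_+$. Moreover, for points in a compact window $[\epsilon,R]$, the truncated sums $\sum_{\tau_n(\bm u)\le R} g_{n,c}(\tau_n(\bm u))$ together with $\tau_{n,1}$ are continuous functionals of the point configuration at a.s.\ configurations of $\xi$. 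This holds because $\xi$ a.s.\ has no atoms at $R$ or at $e^aT_1$, and no two points coincide. Also $\tau_{n,1}(L_n)\Rightarrow T_1>0$, so the short vectors stay away from $0$ with high probability.

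\emph{Numerator.} The window $\{\tau_n\le e^a\tau_{n,1}\}$ lies inside $[0,e^aT]$ with high probability once $T$ is large. On this compact window, the finite-dimensional convergence in Theorem~\ref{thm:sodergren} gives
$$
\bigl(A_n,\ \textstyle\sum_{\tau_n\le R}g_{n,c}(\tau_n)\bigr)\Rightarrow\bigl(A,\ \textstyle\sum_{T_j\le R}T_j^{-c}\bigr)
$$
jointly, with $A=\sum_{T_j\le e^aT_1}T_j^{-c}$. This uses the continuous mapping theorem together with uniform convergence of $g_{n,c}$ on compacts.

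\emph{Case $c>1$.} Lemma~\ref{lem:tail} and Markov's inequality give
$$
\limsup_n\pr\Bigl(\sum_{\tau_n>R}g_{n,c}>\eta\Bigr)\le \frac{R^{1-c}}{2(c-1)\eta}\to0\qquad\text{as }R\to\infty.
$$
By Lemma~\ref{lem:powersum}, the Poisson tail $\sum_{T_j>R}T_j^{-c}$ tends to $0$ a.s. A standard approximation argument (Billingsley's Theorem 3.2 on triangular arrays) then yields joint convergence $(A_n,M_{n,c})\Rightarrow(A,\sum_j T_j^{-c})$. The limit denominator is a.s.\ positive and finite, so the continuous mapping theorem applied to the ratio gives $P_{n,c}(a,L_n)\Rightarrow Q_c(a)$.

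\emph{Case $0<c\le1$.} Here $A_n$ is tight, being convergent in distribution, so it suffices to show $M_{n,c}(L_n)\to\infty$ in probability. Since $g_{n,c}\ge0$, for every fixed $R$ we have
$$
M_{n,c}\ge \sum_{\tau_n\le R}g_{n,c}(\tau_n)\Rightarrow \sum_{T_j\le R}T_j^{-c}.
$$
By Lemma~\ref{lem:powersum}, this lower limit tends to $\infty$ a.s.\ as $R\to\infty$. Hence for any $K$ we can choose $R$ with $\pr(\sum_{T_j\le R}T_j^{-c}<K)<\epsilon$. The portmanteau theorem applied to the closed set $[0,K]$ then gives $\limsup_n\pr(M_{n,c}\le K)<\epsilon$. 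Combining this with tightness of $A_n$ gives $P_{n,c}\to0$ in probability. No tail estimate is needed in this case, which is fortunate because Lemma~\ref{lem:tail} fails for $c\le1$.

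\emph{Main obstacle.} The main difficulty is justifying the continuity step: passing from the vague convergence $\xi_n\Rightarrow\xi$ to convergence of the functionals $\sum_{\tau\le R}g_{n,c}(\tau)$ and $A_n$. These functionals depend on $n$ through $g_{n,c}$ and use the random cutoff $e^a\tau_{n,1}$. I would handle this through the finite-dimensional statement of Theorem~\ref{thm:sodergren}: for fixed $k$, $(\tau_{n,1},\dots,\tau_{n,k})\Rightarrow(T_1,\dots,T_k)$, where $\tau_{n,j}$ denotes the $j$-th smallest volume variable. The number of points below $R$ is tight, so on the event $\{\tau_{n,k}>R\}$ (probability close to $1$ for large $k$) every sum involved is a continuous function of $(\tau_{n,1},\dots,\tau_{n,k})$ plus an error from $\sup_{[\epsilon,R]}|g_{n,c}(s)-s^{-c}|\to0$. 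The indicator discontinuities at $\tau_{n,j}=e^a\tau_{n,1}$ or $\tau_{n,j}=R$ occur with probability zero in the limit, since the $T_j$ have a continuous joint law.
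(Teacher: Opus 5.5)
Your proposal is correct and follows essentially the paper's proof. Both arguments truncate to a compact window away from $0$ using $\tau_{n,1}(L_n)\Rightarrow T_1>0$, and combine uniform convergence $g_{n,c}\to s^{-c}$ with Theorem~\ref{thm:sodergren}. For $c>1$ the upper cutoff is removed via Lemma~\ref{lem:tail}, Markov, and Lemma~\ref{lem:powersum}; for $c\le1$ the truncated denominator diverges by Lemma~\ref{lem:powersum}.

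The differences are cosmetic. You get tightness of the numerator from its distributional convergence rather than the paper's bound $C\xi_n([r,e^aR])$. You also make the continuity step explicit through the finite-dimensional laws of the ordered points and a converging-together argument.
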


\begin{proof}
First assume $0<c\le1$.
We denote the numerator of $P_{n,c}(a,L_n)$ by
$$
A_{n,c}(a,L_n)=
\sum_{\bm u\in L_n^\sharp,\ \tau_n(\bm u)\le e^a\tau_{n,1}(L_n)}
g_{n,c}(\tau_n(\bm u)).
$$
This is the numerator of $P_{n,c}(a,L_n)$.
We first show that $A_{n,c}(a,L_n)$ is tight.
Let $\epsilon>0$.
Since $\tau_{n,1}(L_n)\Rightarrow T_1$ and $0<T_1<\infty$ almost surely, we may choose
$0<r<R<\infty$ such that
$$
\liminf_{n\to\infty}
\pr\left\{r\le \tau_{n,1}(L_n)\le R\right\}\ge 1-\epsilon.
$$
On this event, the random window
$[\tau_{n,1}(L_n),e^a\tau_{n,1}(L_n)]$ is contained in $[r,e^aR]$.
By Theorem \ref{thm:sodergren}, the number of points of $\xi_n$ in
$[r,e^aR]$ is tight.
Moreover, $g_{n,c}$ converges uniformly to $s^{-c}$ on $[r,e^aR]$.
Hence there is a constant $C=C(r,R,a,c)$ such that
$$
\sup_{s\in[r,e^aR]}g_{n,c}(s)\le C
$$
for all sufficiently large $n$.
Therefore, on the above event,
$$
A_{n,c}(a,L_n)
\le
C\xi_n([r,e^aR])
$$
for all sufficiently large $n$.
Since the right hand side is tight and the exceptional event has limiting
probability at most $\epsilon$, the numerator $A_{n,c}(a,L_n)$ is tight.

The denominator diverges in probability.
For fixed $R>1$, put
$$
M_{n,c}^{[1,R]}(L_n)=
\sum_{\bm u\in L_n^\sharp,\ 1\le\tau_n(\bm u)\le R}
 g_{n,c}(\tau_n(\bm u)).
$$
On $[1,R]$, the functions $g_{n,c}$ converge uniformly to $s^{-c}$.
Hence Theorem \ref{thm:sodergren} gives
$$
M_{n,c}^{[1,R]}(L_n)
\Rightarrow
\sum_{1\le T_j\le R}T_j^{-c}.
$$
By Lemma \ref{lem:powersum}, the last sum tends to infinity almost surely as $R\to\infty$ when $0<c\le1$.
More explicitly, for every $K>0$ and $\epsilon>0$, one can first choose $R$ so that the limiting truncated sum exceeds $K$ with probability at least $1-\epsilon$, and then use the preceding convergence in distribution.
Since $M_{n,c}(L_n)\ge M_{n,c}^{[1,R]}(L_n)$, it follows that $M_{n,c}(L_n)$ tends to infinity in probability.
The ratio therefore tends to zero in probability.

Now assume $c>1$.
Let $0<r<R$.
On $[r,R]$, the functions $g_{n,c}$ converge uniformly to $s^{-c}$.
The point process convergence gives convergence of the finite weighted point configurations in this interval.
The lower cutoff is removed because $\tau_{n,1}(L_n)\Rightarrow T_1$ and $\pr(T_1\le r)\to0$ as $r\downarrow0$.
The upper cutoff is removed by Lemma \ref{lem:tail} and Markov's inequality, while Lemma \ref{lem:powersum} controls the corresponding tail of the limiting process.
Thus the numerator and denominator of $P_{n,c}(a,L_n)$ converge jointly to
$$
\sum_{T_j\le e^aT_1}T_j^{-c}
\qquad {\rm and}\qquad
\sum_{j=1}^{\infty}T_j^{-c}.
$$
The limiting denominator is positive and finite almost surely.

For a locally finite point configuration
$\xi=\sum_{j\geq 1}\delta_{T_j(\xi)}$ on $\re_+$, we write
$0<T_1(\xi)<T_2(\xi)<\cdots$ for its ordered points.
For $a>0$, the functional
$$
\xi\mapsto
\sum_{T_j(\xi)\leq e^a T_1(\xi)}
T_j(\xi)^{-c}
$$
is continuous at the limiting Poisson configuration almost surely.
Indeed, the limiting point process is simple, $T_1>0$ almost surely,
and there is almost surely no point at the random boundary $e^aT_1$.
In the present regime $c>1$, the denominator functional
$$
\xi\mapsto \sum_{j=1}^{\infty}T_j(\xi)^{-c}
$$
is also almost surely continuous, because the limiting sum is finite and
the tail contribution is summable.
Hence the continuous mapping theorem gives the stated convergence for
$a>0$.
When $a=0$, the numerator is just $T_1(\xi)^{-c}$.
Since the first point is isolated almost surely, this functional is also
continuous at the limiting Poisson configuration.
The same denominator argument applies, and the conclusion follows also
for $a=0$.
\end{proof}

\begin{thm}[Poisson--Dirichlet mass partition]
\label{thm:pd}
Let $L_n$ be distributed according to $\mu_n$ on $\mathbb X_n$.
Let $c>1$ and put $\alpha=1/c$.
Order the sign classes of $L_n^\sharp$ by increasing $\tau_n$ and write them as $\bm u_{n,1},\bm u_{n,2},\ldots$.
Define
$$
W_{n,j}^{(c)}=
\frac{g_{n,c}(\tau_n(\bm u_{n,j}))}
{\sum_{\bm u\in L_n^\sharp}g_{n,c}(\tau_n(\bm u))}
$$
and $W_{n,c}=(W_{n,1}^{(c)},W_{n,2}^{(c)},\ldots)$.
Then, as a random element of $\ell^1$,
$$
W_{n,c}\Rightarrow W_c:=
\left(
\frac{T_1^{-c}}{\sum_{k=1}^{\infty}T_k^{-c}},
\frac{T_2^{-c}}{\sum_{k=1}^{\infty}T_k^{-c}},
\ldots
\right).
$$
Since $0<T_1<T_2<\cdots$, the sequence
$T_1^{-c},T_2^{-c},\ldots$ is already ordered in decreasing order.
Thus $W_c$ is a normalized ranked mass sequence.
Moreover, $W_c$ has the Poisson--Dirichlet distribution $\PD(1/c,0)$.
\end{thm}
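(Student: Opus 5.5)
The proof has two parts. First we show convergence in $\ell^1$ by the same truncation argument as for Theorem~\ref{thm:edge}. Then we identify the law of the limit by a change of variables that maps the intensity-$1/2$ Poisson process to the stable Poisson process underlying $\PD(\alpha,0)$.

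\emph{Convergence in $\ell^1$.} Since $W_{n,c}$ and $W_c$ are probability vectors, convergence in distribution in $\ell^1$ follows from two facts: finite-dimensional convergence of $(W_{n,1}^{(c)},\dots,W_{n,k}^{(c)})$ for each fixed $k$, and uniform control of the tails $\sum_{j>k}W_{n,j}^{(c)}$. Indeed, for probability vectors with nonnegative entries, coordinatewise convergence together with convergence of total mass gives $\ell^1$ convergence (Scheffé).

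For the finite-dimensional part, Theorem~\ref{thm:sodergren} gives $(\tau_n(\bm u_{n,1}),\dots,\tau_n(\bm u_{n,k}))\Rightarrow (T_1,\dots,T_k)$. The functions $g_{n,c}$ converge to $s^{-c}$ locally uniformly on $\re_+$, so the numerators converge jointly. As in the proof of Theorem~\ref{thm:edge} for $c>1$, the denominator $M_{n,c}(L_n)$ converges jointly with them to $\sum_k T_k^{-c}$, which lies in $(0,\infty)$ almost surely by Lemma~\ref{lem:powersum}. This joint convergence uses three ingredients: truncation to $[r,R]$, removal of the lower cutoff through $\tau_{n,1}\Rightarrow T_1$, and removal of the upper cutoff through Lemma~\ref{lem:tail} with Markov's inequality. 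The continuous mapping theorem then gives finite-dimensional convergence of the ratios.

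For the tails, fix $\epsilon>0$ and choose $R$ with $R^{1-c}/(2(c-1))$ small. On the event $\{\tau_n(\bm u_{n,k})>R\}$, the tail mass $\sum_{j>k}W_{n,j}^{(c)}$ is at most the $g$-mass beyond $R$ divided by $M_{n,c}(L_n)$. This event has probability close to one for $k$ large, because $\xi_n([0,R])$ is tight. The numerator is small in probability by Lemma~\ref{lem:tail}, and the denominator is bounded below in probability by $g_{n,c}(\tau_{n,1})\Rightarrow T_1^{-c}>0$. This yields $\lim_k\limsup_n \pr\{\sum_{j>k}W_{n,j}^{(c)}>\epsilon\}=0$, and the same estimate holds for the limit. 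A standard approximation argument, for instance via bounded Lipschitz test functions on $\ell^1$, then combines the two parts into convergence in $\ell^1$.

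\emph{Identification of the limit.} Put $X_j=T_j^{-c}$. By the mapping theorem, $\{X_j\}$ is a Poisson process on $\re_+$. Its mean measure is the image of $\tfrac12\,ds$ under $s\mapsto s^{-c}$, which gives
$$\#\{j:X_j>x\}\ \text{with mean}\ \tfrac12 x^{-1/c}, \qquad \text{intensity } \tfrac{\alpha}{2}x^{-\alpha-1}dx,\quad \alpha=1/c\in(0,1).$$
This is the Lévy measure of an $\alpha$-stable subordinator, evaluated at time $1/2$ up to a constant. By the Pitman--Yor/Kingman characterization, the ranked jumps of a stable subordinator normalized by their sum have law $\PD(\alpha,0)$, and the constant in the intensity does not matter because of scale invariance. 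The sequence $X_1>X_2>\cdots$ is already ranked, so $W_c\sim\PD(1/c,0)$.

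I expect the main obstacle to be the uniform tail control needed to upgrade finite-dimensional convergence to $\ell^1$. It requires handling the random denominator carefully, and the plan above does this by bounding it from below by the top weight.
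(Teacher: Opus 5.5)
Your proposal is correct and uses the same ingredients as the paper's proof: truncation to $[r,R]$, removal of the lower cutoff via $\tau_{n,1}(L_n)\Rightarrow T_1$ and of the upper cutoff via Lemma~\ref{lem:tail} with Markov's inequality, and identification of the limit through the image Poisson process with intensity $\frac{1}{2c}y^{-1-1/c}\,dy$ together with the Kingman/Pitman--Yor characterization of $\PD(\alpha,0)$. The only difference is organizational: the paper proves $\ell^1$ convergence of the unnormalized weights and then applies the continuous normalization map, while you work directly with the normalized weights. In your version the separate tail estimate is redundant, because your Scheff\'e remark, applied after a Skorokhod representation in $[0,1]^{\mathbb N}$, already upgrades finite-dimensional convergence of probability vectors to $\ell^1$ convergence.
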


\begin{proof}
We first prove convergence of the unnormalized weight sequences.
For $0<r<R$, let $V_{n,c}^{r,R}$ be the sequence of weights
$g_{n,c}(\tau_n(\bm u))$ with $r\le \tau_n(\bm u)\le R$, ordered by increasing $\tau_n$ and padded with zeros.
Define $V_c^{r,R}$ analogously from the weights $T_j^{-c}$ with $r\le T_j\le R$.
On $[r,R]$, the functions $g_{n,c}$ converge uniformly to $s^{-c}$, and the point--process convergence therefore gives
$$
V_{n,c}^{r,R}\Rightarrow V_c^{r,R}
$$
in $\ell^1$.
The lower cutoff is negligible because
$$
\lim_{r\downarrow0}\limsup_{n\to\infty}
\pr\{\tau_{n,1}(L_n)<r\}=0.
$$
For the upper cutoff, Lemma \ref{lem:tail} and Markov's inequality give, for every $\epsilon>0$,
$$
\sup_n\pr\left\{
\sum_{\bm u\in L_n^\sharp,\ \tau_n(\bm u)>R}
 g_{n,c}(\tau_n(\bm u))>\epsilon
\right\}
\le
\frac{R^{1-c}}{2(c-1)\epsilon},
$$
which tends to zero as $R\to\infty$.
Lemma \ref{lem:powersum} gives the corresponding tail control for the limiting sequence.
A converging--together argument now yields
$$
\bigl(g_{n,c}(\tau_n(\bm u_{n,1})),
      g_{n,c}(\tau_n(\bm u_{n,2})),\ldots\bigr)
\Rightarrow
(T_1^{-c},T_2^{-c},\ldots)
$$
in $\ell^1$.
Since the limiting total mass is finite and positive almost surely, normalization is continuous on this event and gives the asserted convergence of $W_{n,c}$.

It remains to identify the law of the limit.
The image process
$$
\eta_c=\sum_{j=1}^{\infty}\delta_{T_j^{-c}}
$$
on $(0,\infty)$ is a Poisson point process with intensity measure
$$
\nu_c(dy)=\frac{1}{2c}y^{-1-1/c}\,dy.
$$
This is, up to multiplication by a positive constant, the jump measure of a stable subordinator of index $\alpha=1/c$.
The normalized ranked jumps of such a stable subordinator have the Poisson--Dirichlet distribution $\PD(\alpha,0)$; see \cite{Kingman75}; see especially \cite{PitmanYor97}.
Hence $W_c$ has distribution $\PD(1/c,0)$.
\end{proof}

\begin{prop}
\label{prop:cost}
Let
$$
P_c=\frac{T_1^{-c}}{\sum_{j=1}^{\infty}T_j^{-c}},
\qquad c>1.
$$
Then
$$
\ex P_c^{-1}=\frac{c}{c-1}.
$$
\end{prop}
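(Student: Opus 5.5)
We have $P_c^{-1}=1+\sum_{j\ge2}(T_1/T_j)^{c}$, so it suffices to show that $\ex\sum_{j\ge2}(T_1/T_j)^c=1/(c-1)$. The plan is to condition on the first point and use the independence structure of the Poisson process. By the strong Markov (renewal) property of a homogeneous Poisson process, given $T_1=t$, the remaining points $T_2,T_3,\dots$ form a Poisson process of intensity $1/2$ on $(t,\infty)$, independent of $T_1$.

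We compute the conditional expectation with Campbell's formula:
$$
\ex\Bigl[\sum_{j\ge2}(T_1/T_j)^c\,\Big|\,T_1=t\Bigr]
=\frac12\int_t^\infty \Bigl(\frac{t}{s}\Bigr)^{c}\,ds
=\frac12\cdot\frac{t}{c-1}.
$$
This is finite exactly because $c>1$, which is the same summability threshold as in Lemma~\ref{lem:powersum}.

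Next we integrate against the law of $T_1$, which is exponential with rate $1/2$, so $\ex T_1=2$. This gives
$$
\ex\sum_{j\ge2}(T_1/T_j)^c=\frac{1}{2(c-1)}\,\ex T_1=\frac{1}{c-1}.
$$
Hence $\ex P_c^{-1}=1+\frac{1}{c-1}=\frac{c}{c-1}$. All terms are nonnegative, so Tonelli's theorem justifies the interchanges of sum, integral and expectation, even before finiteness is known.

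The only step needing care is the conditional Poisson description of $(T_j)_{j\ge2}$ given $T_1$. I would justify it directly: the gaps $T_{j+1}-T_j$ are i.i.d.\ exponential variables independent of $T_1$, so the points $T_1+(T_j-T_1)$ for $j\ge2$ form $t$ plus an independent rate-$1/2$ Poisson process. Alternatively, the Palm/Mecke formula gives the same result: write $\ex\sum_{i\ne j}f(T_i,T_j)\mathbf 1\{T_i\text{ is the minimum}\}$ and integrate over $(t,s)$ with density $\frac14 e^{-t/2}$ for $t<s$. As a cross-check, there is the known $\PD(\alpha,0)$ identity $\ex[V_1^{-1}]=1/(1-\alpha)$ for the largest atom $V_1$, with $\alpha=1/c$, which is consistent with Theorem~\ref{thm:pd}.
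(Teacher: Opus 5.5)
Your proposal is correct and follows the paper's proof essentially verbatim. You write $P_c^{-1}=1+\sum_{j\ge2}(T_1/T_j)^c$, condition on $T_1=t$ so that the later points form a rate-$1/2$ Poisson process on $(t,\infty)$, evaluate $\frac12\int_t^\infty (t/s)^c\,ds=t/(2(c-1))$, and average using $\ex T_1=2$; your extra remarks (i.i.d.\ exponential gaps justifying the conditional description, Tonelli for the interchanges, and the Mecke-formula check) only make explicit what the paper leaves implicit.
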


\begin{proof}
We have
$$
P_c^{-1}=1+\sum_{j=2}^{\infty}\left(\frac{T_1}{T_j}\right)^c.
$$
Conditioned on $T_1=t$, the points larger than $t$ form a Poisson process on $(t,\infty)$ with intensity $1/2$.
Hence
$$
\ex\left[
\sum_{j=2}^{\infty}\left(\frac{t}{T_j}\right)^c
\middle|T_1=t\right]
=
\frac{1}{2}\int_t^\infty\left(\frac{t}{x}\right)^c dx
=
\frac{t}{2(c-1)}.
$$
The first point $T_1$ is exponential with rate $1/2$, so $\ex T_1=2$.
This gives $\ex P_c^{-1}=1+1/(c-1)=c/(c-1)$.
\end{proof}

\section{Primitive Weighted Moment Formula}
\label{sec:primitive}

The fixed-factor approximation window contains exponentially many lattice points on the volume scale.
For this reason one needs more than the edge Poisson theorem.
We use primitive sign classes in order to remove the trivial copies $m\bm v$ along the same one--dimensional lattice subgroup.
This is natural for shortest-vector questions, because a shortest vector is primitive and nonprimitive copies do not create new directions.

We use the notation
$$
\GH_n=v_n^{-1/n}.
$$
Thus the ball of radius $\GH_n$ has volume one.
A nonzero vector $\bm v\in L$ is called primitive if there are no integer $m\ge2$ and vector $\bm w\in L$ such that $\bm v=m\bm w$.
Let $L_{\prim}$ be the set of primitive nonzero vectors in $L$, and let $L_{\prim}^\sharp$ be the set of primitive sign classes in $L$.
We also write $\ze^n_{\prim}$ for the set of primitive nonzero vectors in $\ze^n$.
For a set $D$, the notation $\chi_D$ denotes its indicator function.

We use the row--vector convention in this section.
Thus $L=\ze^n A$ with $A\in\SL(n,\re)$.
Let $\LI_k$ be the set of ordered $k$--tuples $\bm q=(\bm q_1,\ldots,\bm q_k)$ in $(\ze^n)^k$ which are linearly independent over $\re$.
For integer vectors, linear independence over $\re$ is equivalent to linear
independence over $\mathbb Q$.
Indeed, real linear dependence is equivalent to the vanishing of all
$k\times k$ minors, and then the rank over $\mathbb Q$ is also less than $k$.
We shall use the formula below only for $k=1$ and $k=2$.
Thus, in the second moment argument, the condition $1\le k\le n-1$ is satisfied
under the assumption $n\ge3$.
We use the following form of the Rogers--Macbeath--Rogers mean value formula
\cite{Rogers55Mean,Rogers55Moments,MacbeathRogersI,MacbeathRogersII}.

\begin{thm}[Macbeath--Rogers]
\label{thm:macbeath}
Let $1\le k\le n-1$.
For every nonnegative Borel function $f$ on $(\re^n)^k$,
$$
\int_{\mathbb X_n}\sum_{\bm q\in\LI_k}f(\bm q_1A,\ldots,\bm q_kA)\,d\mu_n(L)
=
\int_{(\re^n)^k}f(\bm x_1,\ldots,\bm x_k)\,d\bm x_1\cdots d\bm x_k,
$$
where $A$ is any representative of $L=\ze^nA$.
\end{thm}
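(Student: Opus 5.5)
The plan is to get the formula from invariance plus a normalization, rather than by explicit unfolding.

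\textbf{Invariance.} For nonnegative Borel $f$ on $(\re^n)^k$, set
$$
\Lambda(f)=\int_{\mathbb X_n}\sum_{\bm q\in\LI_k}f(\bm q_1A,\ldots,\bm q_kA)\,d\mu_n(L).
$$
The inner sum depends only on $L$, not on the representative $A$, because $\SL(n,\ze)$ permutes $\LI_k$. So $\Lambda$ is a well-defined positive Borel measure on $(\re^n)^k$, supported on the open set $\mathcal U_k$ of linearly independent $k$-tuples. For $g\in\SL(n,\re)$, replacing $f$ by $f(\cdot\, g)$ corresponds to right translation $A\mapsto Ag$ on $\mathbb X_n$. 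Since $\mu_n$ is right-invariant, $\Lambda$ is invariant under the diagonal right action of $\SL(n,\re)$ on $(\re^n)^k$.

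\textbf{Uniqueness up to a constant.} For $1\le k\le n-1$, $\SL(n,\re)$ acts transitively on $\mathcal U_k$: any independent $k$-tuple extends to a basis, and the last basis vector can be rescaled to give determinant one, using $k<n$. The stabilizer of a point is unimodular, being a semidirect product of $\SL(n-k,\re)$ with a vector group. So $\mathcal U_k$ carries a unique invariant measure up to scalars. Lebesgue measure $d\bm x_1\cdots d\bm x_k$ is invariant, since each $\bm x_i\mapsto\bm x_ig$ has Jacobian $\det g=1$. Hence $\Lambda=C_{n,k}\cdot\mathrm{Leb}$ on $\mathcal U_k$, and its complement is Lebesgue-null.

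\textbf{Normalization.} It remains to show $C_{n,k}=1$. I would take $f=\chi_{B_R}^{\otimes k}$, with $B_R$ the ball of radius $R$, and compare
$$
N_R(L)=\#\{\bm q\in\LI_k:\ \bm q_iA\in B_R\ \text{for all } i\}
$$
with $\mathrm{vol}(B_R)^k$. For each fixed unimodular $L$, lattice point counting gives $\#(L\cap B_R)\sim\mathrm{vol}(B_R)$, and the dependent $k$-tuples number only $O(R^{n(k-1)+(k-1)})=o(R^{nk})$. Hence $N_R(L)/\mathrm{vol}(B_R)^k\to1$ pointwise on $\mathbb X_n$. Fatou's lemma then gives $C_{n,k}\ge1$.

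\textbf{Main obstacle: the upper bound.} The reverse inequality needs domination, and this is the step I expect to be hardest. I would use reduction theory through successive minima. There is the pointwise bound
$$
N_R(L)\le\#(L\cap B_R)^k\le\prod\bigl(1+c_nR/\lambda_i\bigr)^k
$$
after expanding along a reduced basis, combined with the standard integrability of $\prod_i\lambda_i(L)^{-s_i}$ over $\mathbb X_n$ for appropriate exponents, which is where $k\le n-1$ enters. If that bound is too crude, the fallback is Rogers' direct route. Each $\bm q\in\LI_k$ factors uniquely as $\bm q=D\bm p$, with $\bm p$ a primitive $k$-tuple (part of a basis of $\ze^n$) and $D$ an integer $k\times k$ matrix in Hermite normal form. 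Primitive tuples form one $\SL(n,\ze)$-orbit, so unfolding gives the primitive-tuple constant explicitly as $\prod_{j=n-k+1}^{n}\zeta(j)^{-1}$. Summing over $D$ reproduces $\prod_{j=n-k+1}^{n}\zeta(j)$ via the count of sublattices of index $m$ in $\ze^k$, so the product is $1$.
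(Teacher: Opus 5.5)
The paper does not prove this statement. It imports the linearly independent part of Rogers' mean value formula from Rogers and Macbeath--Rogers, and uses it only for $k=1,2$. Your proposal is therefore a genuinely different, self-contained route, essentially Weil's:
\begin{itemize}
\item[(i)] $\Lambda$ is a right $\SL(n,\re)$-invariant measure on the homogeneous space $\mathcal U_k$, hence a multiple of Lebesgue measure;
\item[(ii)] the constant is fixed by $N_R(L)/\mathrm{vol}(B_R)^k\to1$, using Fatou for $C_{n,k}\ge1$ and dominated convergence for $C_{n,k}\le1$.
\end{itemize}
The argument is sound. It also shows exactly where $k\le n-1$ enters: transitivity on $\mathcal U_k$, and integrability of the majorant. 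The citation, by contrast, buys brevity and Rogers' complete formula, including the dependent-tuple terms.

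Three points should be tightened.

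\emph{Order of the steps.} Uniqueness of invariant measures applies to Radon measures, and local finiteness of $\Lambda$ is not automatic. Since $\Lambda(B_R^k)=\int N_R\,d\mu_n$, local finiteness comes from the same majorant as your upper bound. That estimate must therefore precede the identification $\Lambda=C_{n,k}\,\mathrm{Leb}$.

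\emph{The majorant.} Make it explicit. Using $\lambda_1\le\cdots\le\lambda_n$ and Minkowski's bound $\lambda_1\cdots\lambda_n\gg_n1$, one has for $R\ge1$
\[
\frac{\prod_{i=1}^n(1+c_nR/\lambda_i)^k}{R^{nk}}\ll_{n,k}1+\sum_{j=1}^{n-1}(\lambda_1\cdots\lambda_j)^{-k}.
\]
This follows from $\prod_i(1+x_i)\le 2^n\max_{0\le j\le n}\prod_{i\le j}x_i$ for nonincreasing $x_i$. A Siegel-set computation in Iwasawa coordinates then shows that each $(\lambda_1\cdots\lambda_j)^{-k}$ with $1\le j\le n-1$ is $\mu_n$-integrable because $k<n$. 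The critical direction is the one shrinking a rank-$j$ sublattice. This is where $k\le n-1$ is really used in the normalization.

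\emph{The fallback.} The factorization $\bm q=D\bm p$ and the identity $\sum_D|\det D|^{-n}=\prod_{j=n-k+1}^{n}\zeta(j)$ are correct. However, unfolding alone does not produce the primitive constant $\prod_{j=n-k+1}^{n}\zeta(j)^{-1}$. It requires the covolumes of $\SL(m,\ze)$ in $\SL(m,\re)$ for $m=n-k$ and $m=n$, in compatible Haar normalizations. That is the Minkowski--Siegel volume formula, which is classically obtained from Siegel's $k=1$ theorem. So the fallback replaces the normalization problem with a known but nontrivial input rather than avoiding it.
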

We use Theorem~\ref{thm:macbeath} only for $k=1,2$.
For $k=2$, we use precisely the Macbeath--Rogers formula for ordered
linearly independent integer pairs.
Thus the formula gives only the rank--two, off--diagonal contribution to the second moment.
The rank--one dependent contributions are not discarded; in the
primitive sign-class sum they reduce to the diagonal term.
Indeed, two collinear primitive integer vectors are equal up to sign.
Hence, on primitive sign classes, the only collinear contribution is
$u=w$, and this is accounted for by applying the first moment formula to $f^2$.
\begin{lem}
\label{lem:mobius-primitive}
Let $\mu:\mathbb N\to\{-1,0,1\}$ be the M\"obius function. 
Thus $\mu(1)=1$, $\mu(d)=0$ if $d$ is divisible by the square of a prime, and
$\mu(d)=(-1)^r$ if
$$
d=p_1p_2\cdots p_r
$$
with distinct primes $p_1,\ldots,p_r$.

The elementary M\"obius inversion identity is
$$
\sum_{d|N}\mu(d)=
\begin{cases}
1, & N=1,\\
0, & N\ge2.
\end{cases}
$$
Consequently, for every nonzero vector $\bm q\in\ze^n$,
$$
\chi_{\ze^n_{\prim}}(\bm q)=\sum_{d|\bm q}\mu(d),
$$
where $d|\bm q$ means that $d$ divides every coordinate of $\bm q$.
\end{lem}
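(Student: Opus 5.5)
The plan is to prove the divisor-sum identity first and then apply it to the greatest common divisor of the coordinates.

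For the identity, the case $N=1$ is immediate, since the only divisor is $d=1$ and $\mu(1)=1$. For $N\ge2$, we write $N=p_1^{e_1}\cdots p_r^{e_r}$ with $r\ge1$ distinct primes. Since $\mu$ vanishes on non-squarefree integers, only squarefree divisors $d=\prod_{i\in S}p_i$ with $S\subseteq\{1,\ldots,r\}$ contribute. Each such divisor contributes $(-1)^{|S|}$, so
$$
\sum_{d|N}\mu(d)=\sum_{S\subseteq\{1,\ldots,r\}}(-1)^{|S|}=(1-1)^r=0 .
$$
This uses $r\ge1$ through the binomial theorem.

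For the vector statement, let $\bm q\in\ze^n$ be nonzero and put $N=\gcd(q_1,\ldots,q_n)$. This is a positive integer because $\bm q\neq\bm 0$. An integer $d\ge1$ divides every coordinate of $\bm q$ exactly when $d|N$, so $\sum_{d|\bm q}\mu(d)=\sum_{d|N}\mu(d)$, which is a finite sum.

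It remains to check that $\bm q$ is primitive exactly when $N=1$.
\begin{itemize}
\item If $N\ge2$, then $\bm q=N\bm w$ with $\bm w=\bm q/N\in\ze^n$, so $\bm q$ is not primitive in the sense defined above, with $m=N$.
\item Conversely, if $\bm q=m\bm w$ with $m\ge2$ and $\bm w\in\ze^n$, then $m$ divides every coordinate, so $m|N$ and hence $N\ge2$.
\end{itemize}
Combining this with the identity gives $\chi_{\ze^n_{\prim}}(\bm q)=\sum_{d|\bm q}\mu(d)$.

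There is no genuine obstacle here. The only points needing care are excluding $\bm q=\bm 0$, so that the gcd is a positive integer and the sum is finite, and matching the paper's definition of primitivity (no $m\ge2$ with $\bm q=m\bm w$) to the condition $\gcd=1$.
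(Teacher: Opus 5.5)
Your proof is correct and follows the paper's argument essentially step for step. You use the subset sum $(1-1)^r=0$ over the squarefree divisors when $N\ge2$, and then reduce the condition $d\mid\bm q$ to $d\mid\gcd(q_1,\ldots,q_n)$. Your explicit two-sided check that primitivity in the paper's sense is equivalent to $\gcd=1$ is only asserted in the paper, so it is a welcome addition rather than a different route.
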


\begin{proof}
Let $N\ge1$. If $N=1$, the first identity is clear. If $N\ge2$, let
$p_1,\ldots,p_s$ be the distinct prime divisors of $N$. 
Since $\mu(d)=0$ unless $d$ is square--free, we have
$$
\sum_{d|N}\mu(d)
=
\sum_{J\subset\{1,\ldots,s\}}(-1)^{|J|}
=
(1-1)^s
=
0.
$$
Now let $\bm q=(q_1,\ldots,q_n)\ne0$ and put $g=\gcd(q_1,\ldots,q_n)$. 
Then the condition $d|\bm q$ is equivalent to $d|g$. 
Hence
$$
\sum_{d|\bm q}\mu(d)=\sum_{d|g}\mu(d).
$$
By the preceding identity, this is equal to $1$ if $g=1$ and to $0$ otherwise.
This is exactly the assertion that $\bm q$ is primitive.
\end{proof}
The next proposition uses the case $k=2$ of Theorem \ref{thm:macbeath} in the
second moment computation, and this is why we assume $n\ge3$.
\begin{prop}
\label{prop:weighted}
Assume $n\ge3$.
Let $f:\re^n\to[0,\infty)$ be an even Borel function such that $f$ and $f^2$ are integrable.
For $L\in\mathbb X_n$, define
$$
S_f(L)=\sum_{\bm u\in L_{\prim}^\sharp} f(\bm u),
$$
where $f(\bm u)$ means $f(\bm v)$ for either representative of $\bm u$.
Then, with respect to the random choice of $L$ according to $\mu_n$, the
mean and variance of $S_f(L)$ are given by 
$$
\ex_{\mu_n}S_f(L)=
\frac{1}{2\zeta(n)}\int_{\re^n}f(\bm x)\,d\bm x
$$
and
$$
\Var_{\mu_n}(S_f(L))=
\frac{1}{2\zeta(n)}\int_{\re^n}f(\bm x)^2\,d\bm x.
$$
\end{prop}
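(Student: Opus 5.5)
The plan is to compute the first and second moments of $S_f(L)$ directly. We pass from primitive sign classes to primitive integer vectors, remove the primitivity constraint with the M\"obius identity of Lemma~\ref{lem:mobius-primitive}, and then apply Theorem~\ref{thm:macbeath} with $k=1$ and $k=2$.

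Write $L=\ze^nA$. Since $f$ is even, each primitive sign class corresponds to exactly two primitive vectors $\pm\bm q A$, so
$$
S_f(L)=\frac12\sum_{\bm q\in\ze^n_{\prim}}f(\bm qA).
$$

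\emph{First moment.} Insert $\chi_{\ze^n_{\prim}}(\bm q)=\sum_{d|\bm q}\mu(d)$ and substitute $\bm q=d\bm q'$ with $\bm q'\ne\bm 0$. This gives
$$
\sum_{\bm q\in\ze^n_{\prim}}f(\bm qA)=\sum_{d\ge1}\mu(d)\sum_{\bm q'\in\ze^n\setminus\{\bm 0\}}f(d\bm q'A).
$$
Note that $\LI_1=\ze^n\setminus\{\bm 0\}$. Theorem~\ref{thm:macbeath} with $k=1$ applied to $\bm x\mapsto f(d\bm x)$ gives expectation $\int f(d\bm x)\,d\bm x=d^{-n}\int f$ for each $d$. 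Summing, $\sum_d\mu(d)d^{-n}=1/\zeta(n)$, and the factor $\frac12$ yields the stated mean.

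To justify taking the expectation inside the signed $d$-sum, I would bound the double sum by the same expression with $|\mu(d)|$. All terms are then nonnegative, Tonelli applies, and the bound has expectation $\sum_d d^{-n}\int f<\infty$ for $n\ge2$. Dominated convergence then permits the interchange.

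\emph{Second moment.} Expand $S_f(L)^2$ as a sum over ordered pairs $(\bm u,\bm w)$ of primitive sign classes and split it into two parts.

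\textbf{Collinear pairs.} Two collinear primitive vectors are equal up to sign, so a collinear pair of primitive sign classes is exactly a diagonal pair $\bm u=\bm w$. This part equals $S_{f^2}(L)$. By the first-moment formula applied to $f^2$, its expectation is $\frac{1}{2\zeta(n)}\int f^2$.

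\textbf{Non-collinear pairs.} Here $\bm u\ne\bm w$, and the part equals
$$
\frac14\sum_{\substack{(\bm q_1,\bm q_2)\in\LI_2\\ \bm q_1,\bm q_2\ \text{primitive}}}f(\bm q_1A)f(\bm q_2A).
$$
Apply the M\"obius identity in each coordinate and write $\bm q_i=d_i\bm q_i'$. The key observation is that $(d_1\bm q_1',d_2\bm q_2')\in\LI_2$ if and only if $(\bm q_1',\bm q_2')\in\LI_2$. The inner sum therefore runs over all of $\LI_2$.

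Theorem~\ref{thm:macbeath} with $k=2$, which needs $n\ge3$, gives expectation $(d_1d_2)^{-n}(\int f)^2$ for each pair $(d_1,d_2)$. Summing over $d_1,d_2$ gives $\zeta(n)^{-2}$. Hence this part has expectation
$$
\frac{1}{4\zeta(n)^2}\Bigl(\int f\Bigr)^2=(\ex S_f)^2.
$$
The integrability of $f$ and the same $|\mu|$ domination argument justify the interchanges here.

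Subtracting $(\ex S_f)^2$ from $\ex S_f^2$ leaves exactly $\ex S_{f^2}=\frac{1}{2\zeta(n)}\int f^2$, which is the variance formula. The integrability of $f^2$ ensures this quantity is finite.

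I expect the main obstacle to be the careful bookkeeping in the second moment. One must check that the rank-one contributions are exactly the diagonal, with no leftover pairs $\bm u=-\bm w$ once we work with sign classes. One must also check that the M\"obius substitution maps $\LI_2$ bijectively onto itself for each $(d_1,d_2)$, and that the signed double M\"obius series may be interchanged with the integral over $\mathbb X_n$. For the last point I would first verify absolute convergence with $|\mu|$ and then apply dominated convergence.
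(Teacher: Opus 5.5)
Your proposal is correct and follows essentially the same route as the paper: M\"obius inversion on primitivity, Siegel/Macbeath--Rogers with $k=1$ for the mean, the diagonal/off-diagonal split with the sign-class factors $\frac12$ and $\frac14$, and Macbeath--Rogers with $k=2$ over $\LI_2$ for the non-collinear pairs. The only difference is technical. The paper first proves the identities for bounded compactly supported $f$ and then extends to general $f$ by monotone convergence. You instead justify the signed M\"obius interchanges directly for general $f$ by $|\mu|$-domination and Fubini, which works equally well.
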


\begin{proof}
We first prove the identities for bounded compactly supported $f$.
In this case all lattice sums are finite for each $L$.
The sums over $d$ and over $d,e$ are absolutely summable after taking expectation, since
$$
\sum_{d=1}^{\infty}d^{-n}<\infty
$$
for $n\ge3$.
Thus the changes of order of summation, expectation, and integration below are justified.
For brevity, let
$$
I_f=\int_{\re^n}f(\bm x)\,d\bm x.
$$
We use Lemma \ref{lem:mobius-primitive}.
Since primitivity is preserved by multiplication by $A\in\SL(n,\re)$, Siegel's formula \cite{Siegel45} gives
\begin{eqnarray*}
\ex_{\mu_n}\sum_{\bm v\in L_{\prim}}f(\bm v)
&=&
\sum_{d=1}^{\infty}\mu(d)
\ex_{\mu_n}
\sum_{\bm q\in\ze^n\setminus\{\bm 0\}}
f(d\bm q A)
\\
&=&
\sum_{d=1}^{\infty}\mu(d)d^{-n} I_f
\\
&=&
\frac{1}{\zeta(n)}I_f.
\end{eqnarray*}
Since $f$ is even, passing from primitive vectors to primitive sign classes divides this by $2$.
This proves the first formula.
For the second moment, we write
$$
S_f(L)^2
=
\sum_{\bm u\in L_{\prim}^\sharp}f(\bm u)^2
+
\sum_{\substack{\bm u,\bm w\in L_{\prim}^\sharp\\
\bm u\ne\bm w}}
f(\bm u)f(\bm w).
$$
The expectation of the first term is the first formula applied to $f^2$.
Hence it is
$$
\frac{1}{2\zeta(n)}
\int_{\re^n}f(\bm x)^2\,d\bm x.
$$
It remains to compute the off--diagonal term.
Two distinct primitive sign classes are linearly independent when only two sign classes are considered.
Indeed, if two primitive lattice vectors are linearly dependent, then they lie in the same rank--one subgroup of $L$.
Primitivity then forces them to differ only by sign.
Thus the off--diagonal term is one fourth of the sum of $f(\bm v)f(\bm w)$ over ordered linearly independent primitive vector pairs.
After choosing representatives $\bm v=\bm qA$ and $\bm w=\bm rA$, the equivalence of real and rational independence shows that these pairs correspond to elements of $\LI_2$.
Since $n\ge3$, we have $2\le n-1$.
Therefore Theorem \ref{thm:macbeath} can be applied with $k=2$.
Using Lemma \ref{lem:mobius-primitive} in both variables and Theorem \ref{thm:macbeath}, we obtain
\begin{eqnarray*}
&&
\ex_{\mu_n}
\sum_{\substack{\bm u,\bm w\in L_{\prim}^\sharp\\
\bm u\ne\bm w}}
f(\bm u)f(\bm w)
\\
&=&
\frac{1}{4}
\sum_{d=1}^{\infty}
\sum_{e=1}^{\infty}
\mu(d)\mu(e)d^{-n}e^{-n} I_f^2
\\
&=&
\frac{1}{4}
\left(\sum_{d=1}^{\infty}\mu(d)d^{-n}\right)^2 I_f^2
\\
&=&
\frac{1}{4\zeta(n)^2}I_f^2.
\end{eqnarray*}
This is equal to $\{\ex_{\mu_n}S_f(L)\}^2$.
Subtracting the square of the mean gives
$$
\Var_{\mu_n}(S_f(L))
=
\frac{1}{2\zeta(n)}
\int_{\re^n}f(\bm x)^2\,d\bm x.
$$
We now pass to a general nonnegative Borel function $f$ such that $f$ and $f^2$ are integrable.
For $j\ge1$, define
$$
f_j(\bm x)
=
\min\{f(\bm x),j\}\chi_{\{\|\bm x\|\le j\}}.
$$
Then $f_j\uparrow f$ and $f_j^2\uparrow f^2$.
Also $S_{f_j}(L)\uparrow S_f(L)$ for every $L$.
The first and second moment formulas therefore pass to the limit by monotone convergence.
The variance formula follows by subtracting the square of the mean.
This proves the proposition.
\end{proof}

\begin{cor}
\label{cor:primitivecounts}
Let $r>0$ and let $M_r(L)$ be the number of primitive sign classes with $\|\bm u\|\le r$.
Let
$$
\mu_r=\frac{v_nr^n}{2\zeta(n)}.
$$
Then
$$
\ex_{\mu_n}M_r(L)=\mu_r,
\qquad
\Var_{\mu_n}(M_r(L))=\mu_r.
$$
Let $L_n$ be distributed according to $\mu_n$ on $\mathbb X_n$.
In particular, if $r_n>0$ and $v_nr_n^n/(2\zeta(n))\to\infty$, then
$$
\frac{M_{r_n}(L_n)}{v_nr_n^n/(2\zeta(n))}\to1
$$
in probability.
For fixed $\rho>1$,
$$
\frac{M_{\rho\GH_n}(L_n)}{\rho^n/(2\zeta(n))}\to1
$$
in probability.
\end{cor}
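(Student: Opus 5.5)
The plan is to apply Proposition~\ref{prop:weighted} with $f=\chi_{B_r}$, the indicator of the closed Euclidean ball of radius $r$ centred at the origin. This $f$ is even, nonnegative, bounded and compactly supported, so both $f$ and $f^2$ are integrable. Since $f^2=f$, the two integrals in the proposition coincide:
$$
\int_{\re^n}f(\bm x)\,d\bm x=\int_{\re^n}f(\bm x)^2\,d\bm x=v_nr^n .
$$
Moreover $S_f(L)=M_r(L)$ by definition. The proposition therefore gives $\ex_{\mu_n}M_r(L)=\Var_{\mu_n}(M_r(L))=v_nr^n/(2\zeta(n))=\mu_r$. We need $n\ge3$ for this, which is harmless because all remaining claims are asymptotic in $n$.

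For the concentration statement, we apply Chebyshev's inequality with $\mu_{r_n}=v_nr_n^n/(2\zeta(n))$. For every $\epsilon>0$,
$$
\pr\left\{\left|\frac{M_{r_n}(L_n)}{\mu_{r_n}}-1\right|>\epsilon\right\}
\le\frac{\Var(M_{r_n}(L_n))}{\epsilon^2\mu_{r_n}^2}
=\frac{1}{\epsilon^2\mu_{r_n}},
$$
and the right-hand side tends to zero by the assumption $\mu_{r_n}\to\infty$. This is exactly convergence in probability of the ratio to $1$.

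For the last claim, take $r_n=\rho\GH_n$. Since $\GH_n=v_n^{-1/n}$, we have $v_nr_n^n=\rho^n$, so $\mu_{r_n}=\rho^n/(2\zeta(n))$. Because $\zeta(n)\to1$ and $\rho>1$, this tends to infinity, and the previous paragraph applies.

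There is no real obstacle here. The only points to check are that the indicator satisfies the hypotheses of Proposition~\ref{prop:weighted} and that the restriction $n\ge3$ does not affect the limits.
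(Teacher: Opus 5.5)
Your proposal is correct and matches the paper's proof: apply Proposition~\ref{prop:weighted} to the indicator of the closed ball, where $f^2=f$ makes the mean and the variance both equal to $\mu_r$. Then use Chebyshev's inequality, and for the last claim note that $v_n(\rho\GH_n)^n=\rho^n$. Your remark that the moment identities need $n\ge3$ is a correct clarification that the paper's statement leaves implicit.
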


\begin{proof}
Apply Proposition \ref{prop:weighted} to the indicator function $\chi_{\{\|\cdot\|\le r\}}$.
The convergence follows from Chebyshev's inequality.
If $r_n=\rho\GH_n$, then $v_nr_n^n=\rho^n$.
\end{proof}

\section{Quenched Thermal Concentration for Fixed Factors}
\label{sec:quenched}

We now pass from unweighted primitive counts to weighted Gibbs mass.
For $\bm u\in L_{\prim}^\sharp$, put
$$
Y_n(\bm u)=\frac{1}{n}\log\tau_n(\bm u)
=\log\frac{\|\bm u\|}{\GH_n}.
$$
For a Borel set $A\subset\re$, define the primitive weighted mass
$$
B_{n,c}(A,L)=
\sum_{\bm u\in L_{\prim}^\sharp,\ Y_n(\bm u)\in A}
 g_{n,c}(\tau_n(\bm u)).
$$
We write $B_{n,c}(L)=B_{n,c}(\re,L)$.
The corresponding primitive Gibbs probability of the set $A$ is
$$
\Pi_{n,c}(A,L)=\frac{B_{n,c}(A,L)}{B_{n,c}(L)}.
$$
The exponent governing the first moment is
$$
\Phi_c(y)=y-\frac{c}{2}(e^{2y}-1),
\qquad y\in\re.
$$
The term $y$ is the entropy coming from the growth of the volume variable.
The term $-(c/2)(e^{2y}-1)$ is the Gibbs energy penalty.
The theorem below shows that this elementary variational function gives the actual quenched thermal scale in the primitive ensemble.

\begin{lem}
\label{lem:laplace}
Let $0<c<1$ and put
$$
y_c=\frac{1}{2}\log\frac{1}{c}
$$
and
$$
F(c)=\Phi_c(y_c)=
\frac{1}{2}\log\frac{1}{c}-\frac{1-c}{2}.
$$
Then $F(c)>0$.
Moreover, if
$$
\Psi_c(y)=y-c(e^{2y}-1),
$$
then
$$
\sup_{y\ge0}\Psi_c(y)<2F(c).
$$
\end{lem}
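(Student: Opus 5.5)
The plan is to reduce both claims to elementary one-variable calculus, treating $\Phi_c$ and $\Psi_c$ as explicit smooth functions of $y$.

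\emph{Positivity of $F(c)$.} First I confirm that $y_c$ is the critical point of $\Phi_c$: we have $\Phi_c'(y)=1-ce^{2y}$, which vanishes exactly when $e^{2y}=1/c$. Substituting gives
$$
F(c)=\tfrac12\log\tfrac1c-\tfrac{1-c}{2}.
$$
Now set $x=1/c>1$. Then
$$
2F(c)=\log x-\Bigl(1-\tfrac1x\Bigr).
$$
This is strictly positive for $x>1$ by the standard inequality $\log x>1-1/x$. That inequality follows from $e^{t}>1+t$ for $t\neq0$, applied with $t=-\log x$. Equivalently, $h(x)=\log x-1+1/x$ satisfies $h(1)=0$ and $h'(x)=(x-1)/x^2>0$ for $x>1$.

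\emph{Bound on $\Psi_c$.} Since $\Psi_c'(y)=1-2ce^{2y}$ is strictly decreasing, $\Psi_c$ is strictly concave. Its unconstrained maximizer satisfies $e^{2y}=1/(2c)$. I split into two cases according to whether this point lies in $[0,\infty)$.
\begin{itemize}
\item If $1/2\le c<1$, then $\Psi_c'\le0$ on $[0,\infty)$. Hence $\sup_{y\ge0}\Psi_c=\Psi_c(0)=0$, which is strictly less than $2F(c)$ by the first part.
\item If $0<c<1/2$, the supremum is attained at $y^*=\tfrac12\log\tfrac1{2c}$, with
$$
\Psi_c(y^*)=\tfrac12\log\tfrac1{2c}-\tfrac12+c.
$$
\end{itemize}
In the second case a direct subtraction gives
$$
2F(c)-\Psi_c(y^*)=\log\tfrac1c-1+c-\tfrac12\log\tfrac1{2c}+\tfrac12-c=\tfrac12\Bigl(\log\tfrac2c-1\Bigr).
$$
This is positive because $2/c>4>e$.

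\emph{Expected difficulty.} There is no real obstacle here. The only care needed is the case split at $c=1/2$, i.e. whether the maximizer of $\Psi_c$ lies in the constraint region $y\ge0$, and keeping the algebra in the final subtraction straight. Both cases give a strict inequality, so the supremum over $y\ge0$ is strictly below $2F(c)$ for every $c\in(0,1)$.
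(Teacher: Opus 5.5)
Your proposal is correct and follows essentially the same route as the paper: the critical-point computation for $\Phi_c$, the inequality $\log x>1-1/x$ with $x=1/c$, the case split at $c=1/2$ for the maximizer of $\Psi_c$, and the identical difference $\tfrac12\bigl(\log\tfrac2c-1\bigr)$. You also justify $\log x>1-1/x$ and the monotonicity of $\Psi_c$ on $[0,\infty)$, which the paper leaves implicit.
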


\begin{proof}
The derivative $\Phi_c'(y)=1-ce^{2y}$ vanishes only at $y_c$.
Since $0<c<1$, this point belongs to $(0,\infty)$ and gives the maximum of $\Phi_c$ on $\re$.
This gives the formula for $F(c)$.
The inequality $F(c)>0$ follows from $\log x>1-1/x$ with $x=1/c>1$.

It remains to compare the second--moment exponent.
If $1/2\le c<1$, then $\Psi_c$ is maximized at $0$ and $\sup\Psi_c=0<2F(c)$.
If $0<c<1/2$, then $\Psi_c$ is maximized at $(1/2)\log(1/(2c))$ and
$$
\sup_{y\ge0}\Psi_c(y)=
\frac{1}{2}\log\frac{1}{2c}-\frac{1-2c}{2}.
$$
A direct calculation gives
$$
2F(c)-\sup_{y\ge0}\Psi_c(y)
=
\frac{1}{2}\left(\log\frac{2}{c}-1\right)>0.
$$
This proves the lemma.
\end{proof}

\begin{thm}[High--temperature quenched thermal concentration]
\label{thm:quenched}
Let $L_n$ be distributed according to $\mu_n$ on $\mathbb X_n$.
Assume $0<c<1$.
Then
$$
\frac{1}{n}\log B_{n,c}(L_n)\to F(c)
$$
in probability.
Moreover, for every fixed real number $a$ with $a\ne y_c$,
$$
\Pi_{n,c}((-\infty,a],L_n)\to
\begin{cases}
0, & a<y_c,\\
1, & a>y_c,
\end{cases}
$$
in probability.
\end{thm}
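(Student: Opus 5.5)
The plan is a first/second moment method for the primitive weighted sums, built on Proposition~\ref{prop:weighted}, with Lemma~\ref{lem:laplace} supplying the exponent gap. Apply Proposition~\ref{prop:weighted} to the even function $f(\bm x)=g_{n,c}(v_n\|\bm x\|^n)\chi_A(\log(\|\bm x\|/\GH_n))$. Then change variables first to $s=v_n\|\bm x\|^n$ (so $d\bm x$ becomes $ds$) and then to $s=e^{ny}$. Since $s^{2/n}=e^{2y}$, this gives
$$
\ex_{\mu_n}B_{n,c}(A,L)=\frac{n}{2\zeta(n)}\int_A e^{n\Phi_c(y)}\,dy,
\qquad
\Var_{\mu_n}B_{n,c}(A,L)=\frac{n}{2\zeta(n)}\int_A e^{n\Psi_c(y)}\,dy .
$$
Both integrals converge: as $y\to+\infty$ the integrands decay like $\exp(-\mathrm{const}\cdot ne^{2y})$, and as $y\to-\infty$ they behave like $e^{n(y+c/2)}$ and $e^{n(y+c)}$ respectively. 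Since $\zeta(n)\to1$, a Laplace-type estimate gives $\frac1n\log\ex B_{n,c}(A,L_n)\to\sup_A\Phi_c$ for every interval $A$ with nonempty interior.

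\emph{Upper bound.} Taking $A=\re$ gives $\ex B_{n,c}(L_n)=e^{n(F(c)+o(1))}$. Markov's inequality then yields $\pr\{B_{n,c}(L_n)>e^{n(F(c)+\delta)}\}\to0$ for every $\delta>0$.

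\emph{Lower bound.} Fix a small $\delta>0$ with $y_c-\delta>0$, and set $I=[y_c-\delta,y_c+\delta]$. The mean of $B_{n,c}(I,L_n)$ is $e^{n(F(c)-o_\delta(1)+o(1))}$, where $o_\delta(1)\to0$ as $\delta\to0$. Because $I\subset[0,\infty)$, the variance is at most $e^{n(\sup_{y\ge0}\Psi_c+o(1))}$. Lemma~\ref{lem:laplace} gives $\sup_{y\ge0}\Psi_c<2F(c)$, so for $\delta$ small the variance is $e^{-\epsilon n}$ times the squared mean for some $\epsilon>0$. Chebyshev's inequality then shows $B_{n,c}(L_n)\ge B_{n,c}(I,L_n)\ge\frac12\ex B_{n,c}(I,L_n)$ with probability tending to one. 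Combining the two bounds proves $\frac1n\log B_{n,c}(L_n)\to F(c)$ in probability.

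The restriction of the variance to $y\ge0$ is where I expect the main obstacle. Lemma~\ref{lem:laplace} controls $\Psi_c$ only on $y\ge0$, while for $y<0$ the weight $g_{n,c}$ can be as large as $e^{cn/2}$, and $\sup_{y<0}\Psi_c$ need not be below $2F(c)$. This is why I would never apply Chebyshev to the full sum, only to windows inside $(0,\infty)$. The region $y<0$ is handled by first moments alone: its mean is $\frac{n}{2\zeta(n)}\int_{-\infty}^0e^{n\Phi_c}\,dy\le n\,e^{n\Phi_c(0)}=n$, since $\Phi_c$ is increasing on $(-\infty,y_c]$. This is $e^{o(n)}$, which is negligible against $e^{n(F(c)-\delta)}$ because $F(c)>0$.

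\emph{Concentration.} For $a<y_c$, $\Phi_c$ is increasing on $(-\infty,a]$, so $\ex B_{n,c}((-\infty,a],L_n)\le e^{n(\Phi_c(a)+o(1))}$ with $\Phi_c(a)<F(c)$. Choose $\delta$ with $\Phi_c(a)+2\delta<F(c)$. By Markov's inequality, $B_{n,c}((-\infty,a],L_n)\le e^{n(\Phi_c(a)+\delta)}$ with high probability, and by the lower bound $B_{n,c}(L_n)\ge e^{n(F(c)-\delta)}$ with high probability. Hence $\Pi_{n,c}((-\infty,a],L_n)\le e^{-n(F(c)-\Phi_c(a)-2\delta)}\to0$ in probability. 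For $a>y_c$, the same argument applied to $(a,\infty)$, using $\sup_{y>a}\Phi_c=\Phi_c(a)<F(c)$, shows that the complementary mass tends to $0$, so $\Pi_{n,c}((-\infty,a],L_n)\to1$. The Laplace asymptotics needed here are elementary, because $\Phi_c$ is smooth and strictly concave with its unique maximum at $y_c$, and the tails were bounded above.
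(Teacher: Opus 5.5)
Your proposal is correct and is essentially the paper's argument. Both use the moment formulas from Proposition~\ref{prop:weighted}, the exponent gap of Lemma~\ref{lem:laplace} applied with Chebyshev only on the scale $y\ge0$ (you correctly see why this restriction is needed), and Markov's inequality for the negative scale and the off-peak windows. The differences are cosmetic: you apply Chebyshev to a window around $y_c$ and Markov to the full sum, where the paper applies Chebyshev to all of $[0,\infty)$. One small fix: your bound on $\int_{-\infty}^0e^{n\Phi_c}\,dy$ should invoke $\Phi_c(y)\le\min\{0,\,y+c/2\}$ rather than monotonicity alone, since the interval is unbounded. The resulting $O(1)$ bound is still correct.
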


\begin{proof}
For a Borel set $A\subset\re$, define
$$
J_{n,c}(A)=n\int_A\exp\{n\Phi_c(y)\}\,dy
$$
and
$$
K_{n,c}(A)=n\int_A\exp\{n\Psi_c(y)\}\,dy.
$$
The change of variables $s=e^{ny}$ gives
$$
\int_{\re^n}g_{n,c}(\tau_n(\bm x))\chi_A(Y_n(\bm x))\,d\bm x
=
J_{n,c}(A),
$$
where $Y_n(\bm x)=n^{-1}\log(v_n\|\bm x\|^n)$.
The same change of variables gives
$$
\int_{\re^n}g_{n,c}(\tau_n(\bm x))^2\chi_A(Y_n(\bm x))\,d\bm x
=
K_{n,c}(A).
$$
Proposition \ref{prop:weighted} therefore gives
$$
\ex_{\mu_n}B_{n,c}(A,L)=\frac{J_{n,c}(A)}{2\zeta(n)}
$$
and
$$
\Var_{\mu_n}(B_{n,c}(A,L))=\frac{K_{n,c}(A)}{2\zeta(n)}
$$
whenever the two integrals are finite.

We first control the positive volume scale.
For $A=[0,\infty)$, the elementary Laplace estimate gives
$$
\lim_{n\to\infty}\frac{1}{n}\log J_{n,c}([0,\infty))=F(c)
$$
and
$$
\lim_{n\to\infty}\frac{1}{n}\log K_{n,c}([0,\infty))=
\sup_{y\ge0}\Psi_c(y).
$$
By Lemma \ref{lem:laplace}, the second exponent is strictly smaller than $2F(c)$.
Hence
$$
\frac{\Var_{\mu_n}(B_{n,c}([0,\infty),L))}
{\{\ex_{\mu_n}B_{n,c}([0,\infty),L)\}^2}\to0.
$$
It follows that
$$
\frac{B_{n,c}([0,\infty),L_n)}
{\ex_{\mu_n}B_{n,c}([0,\infty),L)}\to1
$$
in probability.
Thus
$$
\frac{1}{n}\log B_{n,c}([0,\infty),L_n)\to F(c)
$$
in probability.

The negative volume scale is negligible on this exponential scale.
Indeed, the Laplace estimate gives
$$
\lim_{n\to\infty}\frac{1}{n}\log J_{n,c}((-\infty,0))=0.
$$
Since $F(c)>0$, Markov's inequality gives
$$
\frac{B_{n,c}((-\infty,0),L_n)}{e^{nF(c)/2}}\to0
$$
in probability.
Therefore $B_{n,c}(L_n)=B_{n,c}(\re,L_n)$ has the same logarithmic limit $F(c)$.

Let $a<y_c$.
Then $\sup_{y\le a}\Phi_c(y)=\Phi_c(a)<F(c)$.
Choose $\delta>0$ so small that $\Phi_c(a)+3\delta<F(c)$.
The Laplace estimate and Markov's inequality imply
$$
\pr\{B_{n,c}((-\infty,a],L_n)>e^{n(\Phi_c(a)+2\delta)}\}\to0.
$$
The preceding lower bound on $B_{n,c}(L_n)$ gives
$$
\pr\{B_{n,c}(L_n)<e^{n(F(c)-\delta)}\}\to0.
$$
These two estimates imply $\Pi_{n,c}((-\infty,a],L_n)\to0$ in probability.

Let $a>y_c$.
Then $\sup_{y\ge a}\Phi_c(y)<F(c)$.
The same argument applied to $[a,\infty)$ gives
$$
\frac{B_{n,c}([a,\infty),L_n)}{B_{n,c}(L_n)}\to0
$$
in probability.
Therefore $\Pi_{n,c}((-\infty,a],L_n)\to1$ in probability.
\end{proof}

\begin{cor}[Fixed--factor primitive visibility curve]
\label{cor:fixedfactor}
Let $L_n$ be distributed according to $\mu_n$ on $\mathbb X_n$.
Let $\gamma>1$ and let $0<c<1$.
Define the primitive Gibbs mass of the $\gamma$--approximate shortest-vector window by
$$
\Pi_{n,c}^{\lambda}(\gamma,L)=
\frac{
\sum_{\bm u\in L_{\prim}^\sharp,\ \|\bm u\|\le\gamma\lambda_1(L)}
 g_{n,c}(\tau_n(\bm u))}
{B_{n,c}(L)}.
$$
Then
$$
\Pi_{n,c}^{\lambda}(\gamma,L_n)\to0
$$
in probability if $c<\gamma^{-2}$, while
$$
\Pi_{n,c}^{\lambda}(\gamma,L_n)\to1
$$
in probability if $c>\gamma^{-2}$ and $c<1$.
This probability is taken with respect to the primitive Gibbs ensemble; the shortest length 
$\lambda_1(L)$ is the usual shortest length of the full lattice, but the sampling measure is primitive.
\end{cor}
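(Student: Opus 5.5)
The plan is to reduce the window condition $\|\bm u\|\le\gamma\lambda_1(L)$ to a condition on the exponential coordinate $Y_n(\bm u)$, and then invoke Theorem~\ref{thm:quenched}. Since $Y_n(\bm u)=\log(\|\bm u\|/\GH_n)$, the window is exactly the set
$$
\bigl\{Y_n(\bm u)\le \log\gamma+\Lambda_n(L)\bigr\},\qquad \Lambda_n(L)=\log\frac{\lambda_1(L)}{\GH_n}=\frac1n\log\tau_{n,1}(L).
$$
By Theorem~\ref{thm:sodergren}, $\tau_{n,1}(L_n)\Rightarrow T_1$ with $0<T_1<\infty$ almost surely. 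Hence $\Lambda_n(L_n)\to0$ in probability. Concretely, for every $\eta>0$,
$$
\pr\{|\Lambda_n(L_n)|>\eta\}\le\pr\{\tau_{n,1}(L_n)<e^{-n\eta}\}+\pr\{\tau_{n,1}(L_n)>e^{n\eta}\}\to0 .
$$
Thus, outside an event of vanishing probability, the random threshold lies in $[\log\gamma-\eta,\log\gamma+\eta]$.

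The key comparison is between $\log\gamma$ and $y_c=\tfrac12\log(1/c)$. We have $\log\gamma<y_c$ iff $\gamma^2<1/c$ iff $c<\gamma^{-2}$, and $\log\gamma>y_c$ iff $c>\gamma^{-2}$.

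In the case $c<\gamma^{-2}$, choose $\eta>0$ with $a:=\log\gamma+\eta<y_c$. On the event $|\Lambda_n(L_n)|\le\eta$, the window numerator is at most $B_{n,c}((-\infty,a],L_n)$, since the sum runs over primitive sign classes with $Y_n\le a$. Therefore
$$
\Pi^{\lambda}_{n,c}(\gamma,L_n)\le \Pi_{n,c}((-\infty,a],L_n)
$$
on this event. Theorem~\ref{thm:quenched} gives that the right side tends to $0$ in probability, and the exceptional event has probability $o(1)$.

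In the case $\gamma^{-2}<c<1$, choose $\eta$ with $a:=\log\gamma-\eta>y_c$. On the same event the window contains all primitive sign classes with $Y_n\le a$, so
$$
\Pi^\lambda_{n,c}(\gamma,L_n)\ge\Pi_{n,c}((-\infty,a],L_n)\to1
$$
in probability. Since $\Pi^\lambda_{n,c}\le1$ trivially, the conclusion follows.

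There is one subtlety to check. $\lambda_1(L)$ is the minimum over the full lattice, while the sum is over primitive classes. This causes no trouble: a shortest vector is primitive, and the monotone comparison above only uses the scalar threshold $\lambda_1(L)$, not any property of the ensemble.

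The main obstacle I expect is precisely the randomness of the threshold. Theorem~\ref{thm:quenched} is stated for a fixed $a$, but the threshold $\log\gamma+\Lambda_n(L_n)$ is random. The sandwiching with a deterministic $a$ at distance $\eta$ from $\log\gamma$ handles this. It works because the fluctuation of $\Lambda_n$ is of order $1/n$, which is negligible on the exponential scale. This is also why the argument fails at $c=\gamma^{-2}$, where a finer local Laplace analysis is needed.
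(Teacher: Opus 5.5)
Your proposal is correct and follows the paper's own proof. Both arguments use S\"odergren's theorem to get $\frac1n\log\tau_{n,1}(L_n)\to0$ in probability, and then sandwich the random window $Y_n(\bm u)\le\log\gamma+\frac1n\log\tau_{n,1}(L_n)$ between deterministic windows $Y_n\le\log\gamma\pm\eta$ on the appropriate side of $y_c$. Theorem~\ref{thm:quenched} is then applied to those deterministic windows.
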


\begin{proof}
By Theorem \ref{thm:sodergren}, $\tau_{n,1}(L_n)$ converges in distribution to $T_1$.
Hence $\tau_{n,1}(L_n)^{1/n}\to1$ in probability.
Equivalently, $\lambda_1(L_n)/\GH_n=e^{o(1)}$ in probability.
Let $a=\log\gamma$, so that $\gamma=e^a$ and $a>0$. 
If $c<\gamma^{-2}$, then $a<y_c$. 
Choose $\epsilon>0$ so small that $a+\epsilon<y_c$.
With probability tending to one, 
the window $\|\bm u\|\le\gamma\lambda_1(L_n)$ is contained in the window $Y_n(\bm u)\le a+\epsilon$.
Theorem \ref{thm:quenched} applied to $(-\infty,a+\epsilon]$ then gives the first assertion.

If $c>\gamma^{-2}$ and $c<1$, then $a>y_c$.
Choose $\epsilon>0$ so small that $a-\epsilon>y_c$.
With probability tending to one, the window $Y_n(\bm u)\le a-\epsilon$ is contained in the window $\|\bm u\|\le\gamma\lambda_1(L_n)$.
Theorem \ref{thm:quenched} applied to $(-\infty,a-\epsilon]$ then gives the second assertion.
\end{proof}

\begin{prop}
\label{prop:local-critical-thermal-mass}
Let $L_n$ be distributed according to $\mu_n$ on $\mathbb X_n$.
Assume $0<c<1$ and put
$$
y_c=\frac{1}{2}\log\frac{1}{c}.
$$
For $t\in\re$, define
$$
H(t)=\frac{1}{\sqrt{\pi}}\int_{-\infty}^{t}e^{-x^2}dx.
$$
This is the distribution function of the centered Gaussian law with density
$\pi^{-1/2}e^{-x^2}$, or equivalently of a normal random variable with
variance $1/2$.
Then
$$
\frac{
B_{n,c}((-\infty,y_c+t/\sqrt{n}],L_n)
}{
B_{n,c}(L_n)
}
\longrightarrow H(t)
$$
in probability as $n\to\infty$.
\end{prop}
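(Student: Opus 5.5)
The plan is to repeat the second-moment argument of Theorem~\ref{thm:quenched}, now on the window $A_t=(-\infty,y_c+t/\sqrt n]$, and to add a local Laplace expansion of the first moment around $y_c$. By Proposition~\ref{prop:weighted} and the change of variables used in the proof of Theorem~\ref{thm:quenched}, we have $\ex_{\mu_n}B_{n,c}(A,L)=J_{n,c}(A)/(2\zeta(n))$ and $\Var_{\mu_n}(B_{n,c}(A,L))=K_{n,c}(A)/(2\zeta(n))$.

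First step: the first-moment ratio. Since $\Phi_c''(y_c)=-2ce^{2y_c}=-2$, we have $\Phi_c(y_c+x/\sqrt n)=F(c)-x^2/n+O(|x|^3n^{-3/2})$ uniformly for $|x|\le n^{1/8}$. Substituting $y=y_c+x/\sqrt n$ in $J_{n,c}$ gives
$$
J_{n,c}((-\infty,y_c+t/\sqrt n])=\sqrt n\,e^{nF(c)}\left(\int_{-\infty}^{t}e^{-x^2}dx+o(1)\right).
$$
For the region $|x|>n^{1/8}$ we use concavity of $\Phi_c$. It gives $\Phi_c(y)\le F(c)-\kappa\min\{(y-y_c)^2,|y-y_c|\}$ for some $\kappa>0$, and this bound makes that region contribute $o(\sqrt n e^{nF(c)})$. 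The same computation with $t=\infty$ gives $J_{n,c}(\re)=\sqrt{\pi n}\,e^{nF(c)}(1+o(1))$. Hence the ratio of the two first moments tends to $H(t)$.

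Second step: concentration of numerator and denominator. By the proof of Theorem~\ref{thm:quenched}, $K_{n,c}([0,\infty))=e^{n(\sup_{y\ge0}\Psi_c+o(1))}$, and Lemma~\ref{lem:laplace} gives $\sup_{y\ge0}\Psi_c<2F(c)$. The first moment of $B_{n,c}(A_t\cap[0,\infty),L)$ is of order $\sqrt n\,e^{nF(c)}H(t)$, with $H(t)>0$ for every finite $t$. The variance is at most $K_{n,c}([0,\infty))/(2\zeta(n))$, which is exponentially smaller than the squared mean. Chebyshev's inequality then gives
$$
\frac{B_{n,c}(A_t\cap[0,\infty),L_n)}{\ex_{\mu_n}B_{n,c}(A_t\cap[0,\infty),L)}\to1
$$
in probability, and the same holds for $A=[0,\infty)$. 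The negative part $B_{n,c}((-\infty,0),L_n)$ has mean $e^{o(n)}$, as shown in the proof of Theorem~\ref{thm:quenched}. Markov's inequality makes it $o(e^{nF(c)})$ in probability, so it is negligible against both quantities; note that $y_c>0$ and $F(c)>0$. The factors $2\zeta(n)$ cancel in the ratio.

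Combining the two steps, the ratio in the statement equals $(1+o_P(1))$ times the ratio of first moments, and therefore converges to $H(t)$. The main obstacle is keeping the local Laplace expansion uniform: the cubic Taylor remainder in the window $|x|\le n^{1/8}$, and the tail bound outside it, where $\Phi_c$ decays only linearly as $y\to-\infty$ but exponentially as $y\to\infty$. Apart from this, the argument reuses the estimates of Theorem~\ref{thm:quenched} directly. Applying the proposition with $t=0$ gives $H(0)=1/2$, which is the critical value $1/2$ for $c=\gamma^{-2}$ once one uses $\lambda_1(L_n)/\GH_n=e^{O_P(1/n)}$. This fluctuation is negligible on the $n^{-1/2}$ scale.
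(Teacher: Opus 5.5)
Your proposal is correct and follows essentially the same route as the paper. Both arguments use a local Laplace expansion of $J_{n,c}$ at $y_c$ with $\Phi_c''(y_c)=-2$, then second-moment concentration on $[0,\infty)$ via Proposition~\ref{prop:weighted} and Lemma~\ref{lem:laplace}, then a Markov bound to discard the negative part of the $Y_n$-scale. The only differences are cosmetic: you use a shrinking window $|x|\le n^{1/8}$ with a cubic Taylor remainder and a global concavity tail bound, where the paper uses a fixed $\delta$-neighborhood with a quadratic bound and dominated convergence, working with $[0,y_c+t/\sqrt n]$ instead of $(-\infty,y_c+t/\sqrt n]$.
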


\begin{proof}
We first consider the interval
$$
A_{n,t}=[0,y_c+t/\sqrt{n}].
$$
For all sufficiently large $n$ this interval is nonempty. Recall that
$$
J_{n,c}(A)=
n\int_A \exp\{n\Phi_c(y)\}dy
$$
and
$$
K_{n,c}(A)=
n\int_A \exp\{n\Psi_c(y)\}dy,
$$
where
$$
\Phi_c(y)=y-\frac{c}{2}(e^{2y}-1)
$$
and
$$
\Psi_c(y)=y-c(e^{2y}-1).
$$
By Proposition \ref{prop:weighted},
$$
\ex_{\mu_n}B_{n,c}(A,L)=\frac{J_{n,c}(A)}{2\zeta(n)}
$$
and
$$
\Var_{\mu_n}(B_{n,c}(A,L))
=
\frac{K_{n,c}(A)}{2\zeta(n)}.
$$

We compute the first moment near the maximum of $\Phi_c$.
The function $\Phi_c$ has its unique maximum on $[0,\infty)$ at $y_c$,
and
$$
\Phi_c(y_c)=F(c),\qquad \Phi_c''(y_c)=-2.
$$
Choose $\delta>0$ so small that $y_c-\delta>0$.
Since $y_c$ is the unique maximum of $\Phi_c$ on $[0,\infty)$, there is
$\eta>0$ such that
$$
\Phi_c(y)\le F(c)-\eta
$$
for all $y\in[0,\infty)$ with $|y-y_c|\ge\delta$.
Thus the contribution of this region to $J_{n,c}([0,\infty))$ and to
$J_{n,c}(A_{n,t})$ is exponentially smaller than $e^{nF(c)}$.

On the interval $|y-y_c|<\delta$, put
$$
y=y_c+\frac{x}{\sqrt{n}}.
$$
Then
$$
J_{n,c}(A_{n,t})
=
\sqrt{n}e^{nF(c)}
\int_{-\sqrt{n}y_c}^{t}
\exp\{n(\Phi_c(y_c+x/\sqrt{n})-F(c))\}dx
$$
up to an exponentially smaller error from the part outside
$(y_c-\delta,y_c+\delta)$.
Taylor's formula gives, locally uniformly in $x$,
$$
n(\Phi_c(y_c+x/\sqrt{n})-F(c))\longrightarrow -x^2 .
$$
Moreover, after decreasing $\delta$ if necessary, there is a constant
$\kappa>0$ such that
$$
\Phi_c(y_c+z)\le F(c)-\kappa z^2
$$
for $|z|\le\delta$.
Hence, in the variable $x=\sqrt n z$, the local integrand is bounded by
$e^{-\kappa x^2}$.
Therefore dominated convergence, together with the exponentially small
contribution outside the fixed neighborhood of $y_c$, gives
$$
J_{n,c}(A_{n,t})
=
\sqrt{n}e^{nF(c)}
\left\{
\int_{-\infty}^{t}e^{-x^2}dx+o(1)
\right\}.
$$
The same argument with the moving endpoint removed gives
$$
J_{n,c}([0,\infty))
=
\sqrt{n}e^{nF(c)}
\left\{
\int_{-\infty}^{\infty}e^{-x^2}dx+o(1)
\right\}
=
\sqrt{\pi n}e^{nF(c)}{1+o(1)}.
$$
Consequently,
$$
\frac{J_{n,c}(A_{n,t})}{J_{n,c}([0,\infty))}
\longrightarrow
\frac{1}{\sqrt{\pi}}\int_{-\infty}^{t}e^{-x^2}dx
=H(t).
$$
We next show that the corresponding random weighted sums are concentrated
around their means. By the elementary Laplace upper bound,
$$
\limsup_{n\to\infty}
\frac{1}{n}\log K_{n,c}([0,\infty))
\le
\sup_{y\ge0}\Psi_c(y).
$$
By Lemma \ref{lem:laplace}, the right hand side is strictly smaller than
$2F(c)$. Choose $G$ such that
$$
\sup_{y\ge0}\Psi_c(y)<G<2F(c).
$$
Then, for all sufficiently large $n$,
$$
K_{n,c}([0,\infty))\le e^{nG}.
$$
Since $J_{n,c}(A_{n,t})$ is of order $\sqrt n e^{nF(c)}$, Chebyshev's
inequality gives
$$
\frac{
B_{n,c}(A_{n,t},L_n)
}{
\ex_{\mu_n}B_{n,c}(A_{n,t},L)
}
\longrightarrow 1
$$
in probability. The same argument gives
$$
\frac{
B_{n,c}([0,\infty),L_n)
}{
\ex_{\mu_n}B_{n,c}([0,\infty),L)
}
\longrightarrow 1
$$
in probability.

It remains to remove the negative part of the $Y_n$--scale. For $y<0$,
the function $\Phi_c(y)$ has supremum $0$, whereas $F(c)>0$. Hence
$$
\limsup_{n\to\infty}
\frac{1}{n}\log J_{n,c}((-\infty,0))\le 0.
$$
By Markov's inequality,
$$
\frac{
B_{n,c}((-\infty,0),L_n)
}{
J_{n,c}([0,\infty))
}
\longrightarrow 0
$$
in probability. On the other hand, the preceding concentration estimate
implies that $B_{n,c}([0,\infty),L_n)$ is of order
$J_{n,c}([0,\infty))$ in probability. Therefore
$$
\frac{
B_{n,c}((-\infty,0),L_n)
}{
B_{n,c}([0,\infty),L_n)
}
\longrightarrow 0
$$
in probability. Thus the negative part of the $Y_n$--scale does not affect
the limiting ratio. Combining this fact with the Laplace ratio above gives
$$
\frac{
B_{n,c}((-\infty,y_c+t/\sqrt{n}],L_n)
}{
B_{n,c}(L_n)
}
\longrightarrow H(t)
$$
in probability. This proves the proposition.
\end{proof}

\begin{cor}[Critical fixed-factor primitive visibility]
\label{cor:critical-fixed-factor-visibility}
Let $L_n$ be distributed according to $\mu_n$ on $\mathbb X_n$.
Let $\gamma>1$ and put
$$
c=\gamma^{-2}.
$$
Then
$$
\Pi_{n,c}^{\lambda}(\gamma,L_n)
\longrightarrow
\frac{1}{2}
$$
in probability as $n\to\infty$.
\end{cor}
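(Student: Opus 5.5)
Since $\gamma>1$, we have $0<c=\gamma^{-2}<1$, so Proposition~\ref{prop:local-critical-thermal-mass} applies. Moreover $y_c=\frac12\log\gamma^2=\log\gamma$. The plan is to show that the random window $\|\bm u\|\le\gamma\lambda_1(L_n)$ differs from the deterministic window $Y_n(\bm u)\le y_c$ only by a shift of order $1/n$. That shift is invisible on the $1/\sqrt n$ scale resolved by Proposition~\ref{prop:local-critical-thermal-mass}. The conclusion then follows by a sandwich argument, using $H(0)=1/2$ and the continuity of $H$.

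\emph{Rewriting the window.} Since $Y_n(\bm u)=\log(\|\bm u\|/\GH_n)$, the condition $\|\bm u\|\le\gamma\lambda_1(L)$ is equivalent to
$$
Y_n(\bm u)\le \log\gamma+\log\frac{\lambda_1(L)}{\GH_n}=y_c+\frac1n\log\tau_{n,1}(L).
$$
Here $\tau_{n,1}(L)$ is the minimum over all sign classes of the full lattice, which is the quantity appearing in the corollary. Hence the numerator of $\Pi^\lambda_{n,c}(\gamma,L_n)$ is exactly $B_{n,c}((-\infty,y_c+D_n],L_n)$ with $D_n=n^{-1}\log\tau_{n,1}(L_n)$.

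\emph{Controlling the shift.} By Theorem~\ref{thm:sodergren}, $\tau_{n,1}(L_n)\Rightarrow T_1$, and $0<T_1<\infty$ almost surely. Given $\epsilon>0$, choose $0<r<R$ such that $\pr\{r\le\tau_{n,1}(L_n)\le R\}\ge1-\epsilon$ for all large $n$. On this event $|D_n|\le C/n$ with $C=\max(|\log r|,|\log R|)$. Fix $t>0$. For $n$ large we have $C/n\le t/\sqrt n$, so on the same event
$$
B_{n,c}((-\infty,y_c-t/\sqrt n],L_n)\le B_{n,c}((-\infty,y_c+D_n],L_n)\le B_{n,c}((-\infty,y_c+t/\sqrt n],L_n).
$$
This uses only that $B_{n,c}(A,L)$ is monotone in $A$, because the weights are nonnegative.

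\emph{Conclusion.} Dividing by $B_{n,c}(L_n)$ and applying Proposition~\ref{prop:local-critical-thermal-mass} with $\pm t$, the lower and upper bounds converge in probability to $H(-t)$ and $H(t)$. Thus for every $\eta>0$,
$$
\limsup_{n\to\infty}\pr\{\Pi^\lambda_{n,c}(\gamma,L_n)\notin[H(-t)-\eta,\,H(t)+\eta]\}\le\epsilon.
$$
Now let $\epsilon\downarrow0$, then choose $t$ small enough that $H(\pm t)$ lies within $\eta$ of $H(0)=1/2$. This gives $\Pi^\lambda_{n,c}(\gamma,L_n)\to1/2$ in probability.

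The only delicate point is the bookkeeping of the random, full-lattice endpoint $\lambda_1(L_n)$ against the primitive ensemble. However, the proposition is uniform enough that the tightness of $\tau_{n,1}(L_n)$, both above and away from zero, settles it. No new Laplace estimate is needed.
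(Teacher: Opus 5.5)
Your proposal is correct and follows essentially the same route as the paper. The paper also rewrites the window as $Y_n(\bm u)\le y_c+R_n$ with $R_n=n^{-1}\log\tau_{n,1}(L_n)$, and it uses S\"odergren's theorem to get $\sqrt n R_n\to0$ in probability, which plays the role of your $|D_n|\le C/n$ on a high-probability event. It then sandwiches between $y_c\pm\epsilon/\sqrt n$, applies Proposition~\ref{prop:local-critical-thermal-mass}, and lets $\epsilon\downarrow0$ using $H(0)=1/2$.
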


\begin{proof}
Let $a=\log\gamma$.
Since $c=\gamma^{-2}$, we have
$$
a=\frac{1}{2}\log\frac{1}{c}=y_c.
$$
The condition
$$
\|\bm u\|\le \gamma\lambda_1(L_n)
$$
is equivalent to
$$
Y_n(\bm u)\le a+R_n,
$$
where
$$
R_n=\frac{1}{n}\log \tau_{n,1}(L_n).
$$
By Theorem \ref{thm:sodergren}, $\tau_{n,1}(L_n)$ converges in distribution
to the first point $T_1$ of the limiting Poisson process. In particular,
$$
\log \tau_{n,1}(L_n)=O_{\mathbb P}(1).
$$
Hence
$$
\sqrt n R_n
=
\frac{\log \tau_{n,1}(L_n)}{\sqrt n}
\longrightarrow 0
$$
in probability.

Set
$$
X_n=\Pi_{n,c}^{\lambda}(\gamma,L_n).
$$
Fix $\epsilon>0$ and define
$$
X_n^-=
\frac{
B_{n,c}((-\infty,y_c-\epsilon/\sqrt n],L_n)
}{
B_{n,c}(L_n)
}
$$
and
$$
X_n^+=
\frac{
B_{n,c}((-\infty,y_c+\epsilon/\sqrt n],L_n)
}{
B_{n,c}(L_n)
}.
$$
Since $\sqrt n R_n\to0$ in probability, the event
$$
-\frac{\epsilon}{\sqrt n}\le R_n\le \frac{\epsilon}{\sqrt n}
$$
has probability tending to one. On this event,
$$
X_n^-\le X_n\le X_n^+.
$$
By Proposition \ref{prop:local-critical-thermal-mass},
$$
X_n^-\longrightarrow H(-\epsilon),
\qquad
X_n^+\longrightarrow H(\epsilon)
$$
in probability. Hence, for every $\delta>0$,
$$
\limsup_{n\to\infty}
\pr\{X_n<H(-\epsilon)-\delta\}=0
$$
and
$$
\limsup_{n\to\infty}
\pr\{X_n>H(\epsilon)+\delta\}=0.
$$
Letting $\epsilon\downarrow0$ and using the continuity of $H$ together
with $H(0)=1/2$, we obtain
$$
X_n\longrightarrow\frac{1}{2}
$$
in probability. This is the desired assertion.
\end{proof}

\section{Discussion and Conclusion}
\label{sec:discussion}

The main result is that random-lattice Gibbs ensembles have two related visibility structures.
Near the exact edge, the threshold is $c=1$.
Below and at this threshold, every fixed $e^{a/n}$--approximation window is invisible.
Above it, the Gibbs weights on the edge converge to a Poisson--Dirichlet mass partition.
This is an edge condensation phenomenon for a random geometric Gibbs ensemble.

For fixed approximation factors $\gamma>1$, the relevant scale is different.
The exponential coordinate $Y_n=n^{-1}\log\tau_n$ has entropy term $y$ and energy term $-(c/2)(e^{2y}-1)$.
The resulting function $\Phi_c$ is maximized at $y_c=(1/2)\log(1/c)$ when $0<c<1$.
The primitive weighted moment formula turns this saddle--point picture into a quenched concentration theorem.
This gives the primitive fixed-factor visibility curve $c=\gamma^{-2}$ in probability.
More precisely, below this curve the primitive Gibbs mass of the fixed approximation window vanishes, above this curve it tends to one, and on the critical curve it tends to $1/2$.
In this sense the curve is not only a formal saddle point.
It describes where the typical primitive Gibbs mass is located in the high-dimensional random environment.

The present results may also be read as a thermodynamic reference point for idealized Gibbs targets.
If an ideal sampler produced the Gibbs distribution, then the theorems above describe whether an approximate shortest-vector window has visible mass.
If one could prepare a coherent version of the same Gibbs distribution, amplitude amplification \cite{Grover96,Brassard02} would change a success probability $p$ into a cost of order $p^{-1/2}$, but it would not remove the visibility thresholds found here.
We do not construct a Gibbs sampler or a quantum algorithm, and no claim on the complexity of approximate SVP is made.

The fixed-factor theorem is deliberately narrower than the edge theorem.
It concerns the primitive Gibbs ensemble and assumes $0<c<1$.
Extending the fixed-factor visibility result to the full Gibbs ensemble remains a natural further question.
Other open problems include concrete Markov chains or quantum Gibbs samplers \cite{Temme11}, their mixing or preparation cost, and comparisons with structured lattice families such as $q$--ary, NTRU, and module or ring LWE lattices.
The primitive concentration theorem gives a clean starting point for such future comparisons.

\section*{Data Availability Statement}
No new data were created or analyzed in this study.

\section*{Conflict of interest}
The author declares no conflicts of interest.

\section*{Ethical statement}
Ethical approval was not required for this theoretical study, which does not involve human participants, human data, or animals.

\section*{Acknowledgments}
The author thanks Prof. Takuya Mine for carefully reading an earlier version of the manuscript, 
especially the arguments in Section~\ref{sec:primitive}, and for helpful comments.
This work was supported by JSPS KAKENHI Grant Number 25K07752.


\small
\begin{thebibliography}{99}

\bibitem{Derrida80}
B. Derrida,
Random--energy model: Limit of a family of disordered models,
Physical Review Letters 45 (1980), no. 2, 79--82.
\url{https://doi.org/10.1103/PhysRevLett.45.79}

\bibitem{Derrida81}
B. Derrida,
Random--energy model: An exactly solvable model of disordered systems,
Physical Review B 24 (1981), no. 5, 2613--2626.
\url{https://doi.org/10.1103/PhysRevB.24.2613}

\bibitem{Sodergren11}
A. S\"odergren,
On the Poisson distribution of lengths of lattice vectors in a random lattice,
Mathematische Zeitschrift 269 (2011), no. 3--4, 945--954.
\url{https://doi.org/10.1007/s00209-010-0772-8}

\bibitem{Sodergren10}
A. S\"odergren,
On the value distribution and moments of the Epstein zeta function to the right of the critical strip,
Journal of Number Theory 131 (2011), no. 7, 1176--1208.
\url{https://doi.org/10.1016/j.jnt.2010.12.003}

\bibitem{Sodergren13}
A. S\"odergren,
On the value distribution of the Epstein zeta function in the critical strip,
Duke Mathematical Journal 162 (2013), no. 1, 1--48.
\url{https://doi.org/10.1215/00127094-1903389}

\bibitem{Ajtai96}
M. Ajtai,
Generating hard instances of lattice problems,
extended abstract,
in Proceedings of the 28th ACM Symposium on Theory of Computing,
ACM, 1996, pp. 99--108.
\url{https://doi.org/10.1145/237814.237838}

\bibitem{MG02}
D. Micciancio and S. Goldwasser,
{\em Complexity of Lattice Problems: A Cryptographic Perspective},
Springer, 2002.
\url{https://doi.org/10.1007/978-1-4615-0897-7}

\bibitem{Regev09}
O. Regev,
On lattices, learning with errors, random linear codes, and cryptography,
Journal of the ACM 56 (2009), no. 6, Article 34, 40 pp.
\url{https://doi.org/10.1145/1568318.1568324}

\bibitem{MicciancioRegev07}
D. Micciancio and O. Regev,
Worst--case to average--case reductions based on Gaussian measures,
SIAM Journal on Computing 37 (2007), no. 1, 267--302.
\url{https://doi.org/10.1137/S0097539705447360}

\bibitem{Siegel45}
C. L. Siegel,
A mean value theorem in geometry of numbers,
Annals of Mathematics 46 (1945), no. 2, 340--347.
\url{https://doi.org/10.2307/1969027}

\bibitem{Kingman75}
J. F. C. Kingman,
Random discrete distributions,
Journal of the Royal Statistical Society. Series B (Methodological)
37 (1975), no. 1, 1--15.
\url{https://doi.org/10.1111/j.2517-6161.1975.tb01024.x}

\bibitem{PitmanYor97}
J. Pitman and M. Yor,
The two--parameter Poisson--Dirichlet distribution derived from a stable subordinator,
Annals of Probability 25 (1997), no. 2, 855--900.
\url{https://doi.org/10.1214/aop/1024404422}

\bibitem{Rogers55Mean}
C. A. Rogers,
Mean values over the space of lattices,
Acta Mathematica 94 (1955), 249--287.
\url{https://doi.org/10.1007/BF02392493}

\bibitem{Rogers55Moments}
C. A. Rogers,
The moments of the number of points of a lattice in a bounded set,
Philosophical Transactions of the Royal Society of London. Series A
248 (1955), no. 945, 225--251.
\url{https://doi.org/10.1098/rsta.1955.0015}

\bibitem{MacbeathRogersI}
A. M. Macbeath and C. A. Rogers,
A modified form of Siegel's mean--value theorem,
Proceedings of the Cambridge Philosophical Society
51 (1955), no. 4, 565--576.
\url{https://doi.org/10.1017/S0305004100030656}

\bibitem{MacbeathRogersII}
A. M. Macbeath and C. A. Rogers,
A modified form of Siegel's mean--value theorem. II,
Proceedings of the Cambridge Philosophical Society
54 (1958), no. 3, 322--326.
\url{https://doi.org/10.1017/S030500410003351X}

\bibitem{Grover96}
L. K. Grover,
A fast quantum mechanical algorithm for database search,
in Proceedings of the 28th ACM Symposium on Theory of Computing,
ACM, 1996, pp. 212--219.
\url{https://doi.org/10.1145/237814.237866}

\bibitem{Brassard02}
G. Brassard, P. H{\o}yer, M. Mosca, and A. Tapp,
Quantum amplitude amplification and estimation,
in {\em Quantum Computation and Information},
Contemporary Mathematics 305,
American Mathematical Society, 2002, pp. 53--74.
\url{https://doi.org/10.1090/conm/305/05215}

\bibitem{Temme11}
K. Temme, T. J. Osborne, K. G. H. Vollbrecht, D. Poulin, and F. Verstraete,
Quantum Metropolis sampling,
Nature 471 (2011), no. 7336, 87--90.
\url{https://doi.org/10.1038/nature09770}

\end{thebibliography}
\end{document}